\documentclass[a4paper]{article}
\usepackage{mathrsfs}
\usepackage{latexsym,bm}
\usepackage{amssymb,amsmath,amsthm}
\usepackage[english]{babel}
\usepackage{dsfont}
\usepackage{titlesec}
\usepackage[colorlinks=true]{hyperref}
\usepackage{graphicx}
\usepackage{color}
\usepackage{appendix}
\usepackage{todonotes}
\hypersetup{linkcolor=blue,urlcolor=red,citecolor=red}
\allowdisplaybreaks[4]

\newtheorem{theorem}{Theorem}[section]
\newtheorem{lemma}[theorem]{Lemma}
\newtheorem{remark}[theorem]{Remark}

\newtheorem{definition}[theorem]{Definition}

\numberwithin{equation}{section}

\title{Quantitative rapid stabilization for parabolic equations via the linear quadratic theory}
\author{Shengquan Xiang{\footnote{School of Mathematical Sciences, Peking University, Beijing 100871, China (e-mail: shengquan.xiang@math.pku.edu.cn).}}\and Yu Xiao{\footnote{School of Mathematics and Statistics, Wuhan University, Wuhan 430072, China (e-mail: xiaoyu\_math@whu.edu.cn).}}\and Can Zhang{\footnote{School of Mathematics and Statistics, Wuhan University, Wuhan 430072, China (e-mail: canzhang@whu.edu.cn).}}}
\date{}

\begin{document}
\selectlanguage{english}
\maketitle
\begin{abstract}
This paper addresses the problem of quantitative rapid stabilization via the linear quadratic (LQ) theory. Specifically, for non-self-adjoint parabolic equations, by virtue of the LQ theory, we derive the quantitative rapid stabilization by selecting a special cost functional and combining it with a quantitative observability inequality. Furthermore, this approach reveals the equivalence between the quantitative rapid stabilization and the quantitative observability inequality for linear systems. In addition, we apply this framework to the Navier--Stokes equations and establish the quantitative rapid stabilization around nontrivial steady states. Finally, we extend this methodology to finite-dimensional feedback laws in self-adjoint cases.
\end{abstract}

{\bf Keywords.}  LQ theory, quantitative stabilization, observability inequality, non-self-adjoint
\vskip 6pt 
{\bf 2020 Mathematics Subject Classification.} 35Q30, 93C20, 93D15

\section{Introduction}
Let $\Omega$ be a bounded, connected, open subset of $\mathbb{R}^d$ with smooth boundary, and let $\omega$ be a nonempty open subset of $\Omega$.  Consider the following controlled parabolic equation
\begin{equation}\label{EQ-FIRST}
\begin{cases}
\partial_t y-\Delta y+ay+b\cdot\nabla y=\chi_{\omega}u&\text{in } (0,+\infty)\times\Omega,\\
y=0&\text{on } (0,+\infty)\times \partial\Omega,\\
y(0,\cdot)=y_0\in L^2(\Omega),
\end{cases}
\end{equation}
where $a \in L^\infty(\Omega)$, $b \in L^\infty(\Omega)^d$ with $\text{div } b\in L^\infty(\Omega)$, the control $u$ belongs to $L^2(0,+\infty;L^2(\Omega))$, and $\chi_\omega$ denotes the characteristic function of $\omega$. The equation \eqref{EQ-FIRST} admits a unique solution $y\in C([0,+\infty);L^2(\Omega))\cap L^2_{\text{loc}}(0,+\infty;H^1_0(\Omega))$.

The controllability properties of \eqref{EQ-FIRST} have been extensively studied. Let $T\in (0,1)$. We recall below the \textit{observability inequality} (see, e.g., \cite[Theorem 3.5]{FZL})
\begin{equation}\label{OBS-INTRO}
\|\varphi(0,\cdot)\|_{L^2(\Omega)}\le e^{C\left(\Omega,d,\omega,a,b\right)/T} \|\varphi\|_{L^2((0,T)\times \omega)},
\end{equation}
for the adjoint equation
\begin{equation}\label{ADJOINT-INTRO-FIR}
\begin{cases}
-\partial_t \varphi - \Delta \varphi + a\varphi - \nabla \cdot (b\varphi)=0 & \text{in } (0,T)\times \Omega,\\
\varphi=0 & \text{on }(0,T)\times \partial\Omega,\\
\varphi(T,\cdot) \in L^2(\Omega).
\end{cases}
\end{equation}
By the classical duality principle (cf. \cite{TW} and \cite[Section 2.3]{Coron}), the equation \eqref{EQ-FIRST} is \textit{null controllable} at time $T$, i.e., there exists an open-loop control $u\in L^2(0,+\infty;L^2(\Omega))$\footnote{In fact, $u\in L^2(0,T;L^2(\Omega))$, and we extend $u$ by zero for all $t\ge T$.} such that the corresponding solution satisfies $y(T,\cdot)=0$, with the following control cost estimate:
\begin{equation}\label{C-COST-INTRO}
\|u\|_{ L^2(0,+\infty;L^2(\Omega))}\le e^{C\left(\Omega,d,\omega,a,b\right)/T}\|y_0\|_{L^2(\Omega)}.
\end{equation}

The stabilization problem for \eqref{EQ-FIRST} consists in designing a feedback law such that the energy dissipates asymptotically. Stabilization is closely related to controllability, but it also offers several additional advantages. First, unlike open-loop controllability, feedback stabilization provides a closed-loop mechanism that is more robust with respect to perturbations and uncertainties. Second, feedback laws are especially well suited for implementation in observer-based control frameworks; we refer to \cite{KF,LP1}. In many applications, the full state is not directly accessible, and the controller must rely on estimated states. In such situations, stabilizing feedback laws can be effectively combined with observers and finite-dimensional estimators.

Given a decay rate $\lambda>0$, one says that \eqref{EQ-FIRST} is exponentially stabilizable with rate $\lambda$ if there exists a feedback law such that the corresponding closed-loop system satisfies
$$\|y(t,\cdot)\|_{L^2(\Omega)}\le M_\lambda e^{-\lambda t}\|y_0\|_{L^2(\Omega)},\quad \forall t\ge 0,$$
for the stabilization cost $M_\lambda\ge 1$. If such a feedback law can be constructed for arbitrarily large $\lambda$, then the system is said to be \textit{rapidly stabilizable}. A variety of approaches have been developed to address the rapid stabilization for partial differential equations. Classical representative methods include the linear quadratic (LQ) theory \cite{B2,BW,LT}, spectrum-based methods \cite{BT2,CT1,KF,X2}, the Volterra backstepping \cite{BK,CN}, the Fredholm backstepping \cite{CL, GHMXZ,WaterWave}, weak observability inequalities \cite{LWXY,TWX}, the Gramian operator method \cite{Nyu,Urquiza}, among others. Each method possesses distinct advantages and a specific scope of applicability.

Among the available approaches, the LQ framework plays a particularly important role in stabilization theory. Starting from the seminal works of Lasiecka and Triggiani \cite{LT}, and subsequently those of Barbu, Wang, and many others \cite{B2,BR,BW}, LQ theory has provided a systematic way to construct stabilizing feedback laws through Riccati equations for broad classes of infinite-dimensional systems by solving suitable LQ optimal control problems. Moreover, LQ theory is closely connected with observability inequalities. In particular, it has been shown in \cite{TWX} that, for linear systems, stabilizability can be characterized in terms of appropriate weak observability inequalities. More recently, this characterization has led to several criteria for rapid stabilizability in \cite{KWY,LWXY}.

\subsection{Quantitative rapid stabilization problem}\label{subsec1.1}
While the classical rapid stabilization is qualitative in nature, a more refined question concerns the dependence of the stabilization cost on the prescribed decay rate. This leads to the notion of \textit{quantitative rapid stabilization}, which consists in constructing rapidly stabilizing feedback laws together with explicit estimates on the constant $M_\lambda$ in terms of $\lambda$. Such a quantitative description is of independent interest and has proved to be of considerable significance.

From a practical viewpoint, the quantitative rapid stabilization provides explicit information on the behavior of the closed-loop system. In particular, it makes it possible to estimate the time required for the energy to fall below a prescribed threshold, to compute half-life-type quantities, and to assess robustness with respect to perturbations. For example, under an additional bounded perturbation of size $\varepsilon$, standard perturbation arguments (cf. \cite{TW}) typically yield an estimate of the form
$$\|y(t,\cdot)\|_{L^2(\Omega)}\le M_\lambda e^{-(\lambda-\varepsilon M_\lambda)t}\|y_0\|_{L^2(\Omega)}, \quad\forall t\ge 0.$$
Hence, in order for the perturbed closed-loop system to remain exponentially stable, one needs at least $\varepsilon M_\lambda<\lambda$. This shows that the growth of $M_\lambda$ directly affects the admissible size of perturbations. Furthermore, if $M_\lambda=e^{C\lambda^\alpha}$ for $\lambda>1$, then, for $\alpha>1$, the decay term $e^{-\lambda t}$ cannot effectively compensate for the growth of $M_\lambda$ over short and intermediate times, which may result in a very large stabilization time. Consequently, it is particularly important to ensure that the stabilization cost grows at most like $e^{C\lambda}$. 

From the mathematical viewpoint, the quantitative rapid stabilization is closely connected with several other issues in control theory. On one hand, quantitative estimates for rapidly stabilizing feedback laws can be exploited to derive the small-time feedback null controllability with explicit control costs, as well as the finite-time stabilization through iterative or time-varying constructions. Relevant ideas were introduced in \cite{CN}, and further investigated in \cite{X0,X2}. On the other hand, this quantitative viewpoint can be combined with other problems; in particular, the frequency-Lyapunov method \cite{X2} has further inspired observer-based control, disturbance problems \cite{GPC}, and controllability for stochastic heat equations \cite{HLP}. In this sense, the quantitative rapid stabilization is not merely a strengthened form of the rapid stabilization, but rather a useful framework that bridges feedback stabilization, control cost estimates, and constructive controllability properties.

In recent years, several of the above methods used for rapid stabilization have been refined to yield quantitative results. For one-dimensional equations, substantial progress has been made. More precisely, backstepping techniques provide quantitative estimates by exploiting sharp bounds on the kernels of the associated Volterra transformations; see \cite{CN}. Moreover, Volterra backstepping has also been applied to the KdV equation \cite{X0}, while in \cite{Nyu} the Gramian operator method was used to study the finite-time stabilization for the KdV equation as well. We also refer to the recent work \cite{GHMXZ}, where a quantitative version of the Fredholm backstepping method was finally successfully established in the self-adjoint and skew-adjoint settings, and to the ongoing work \cite{XXZ}, where the modal decomposition method combined with the frequency-Lyapunov method yields quantitative results.\\[-2mm]

By contrast, for multidimensional parabolic equations, the quantitative rapid stabilization problem remains largely open. In the self-adjoint case, such as the heat equation, the frequency-Lyapunov method integrates a low-frequency feedback design with Lebeau--Robbiano--type spectral inequalities to derive explicit bounds for stabilization costs; see \cite{X1,X2}. However, the {\it non-self-adjoint case} has not yet been addressed. The main difficulty is that the spectral and orthogonality properties available in the self-adjoint setting are no longer present, which makes it much harder to derive quantitative estimates for the low-frequency dynamics. There is currently no quantitative rapid stabilization result for non-self-adjoint parabolic equations.\\[-2mm]

We begin by presenting several notations. Let $X$ be a Hilbert space. We use $\|\cdot\|_X$ and $\langle\cdot,\cdot\rangle_X$ to denote its norm and inner product, respectively. We write $X'$ for the dual space of $X$, and $\langle\cdot,\cdot\rangle_{X',X}$ for the duality pairing. For two Banach spaces $X_1,X_2$, we denote by $\mathcal{L}(X_1,X_2)$ the space of all bounded linear operators from $X_1$ to $X_2$ and set $\mathcal{L}(X_1):=\mathcal{L}(X_1,X_1)$. Let $0<T\le +\infty$. We then define the space
$$ W(0,T;X_1, X_2) := \left\{ f \in L^2(0,T; X_1) \,\bigg|\, \frac{d}{dt} f \in L^2(0,T; X_2) \right\},$$ 
equipped with the norm
$$\|f\|_{W(0,T; X_1, X_2)} := \left( \|f\|_{L^2(0,T; X_1)}^2 + \|\frac{d}{dt}f\|_{L^2(0,T; X_2)}^2 \right)^{\frac 12}.$$
Note that if $T\in(0,+\infty)$ and $X_1$ is continuously and densely embedded in $X_2$, then $W(0, T; X_1, X_2)$ is continuously embedded into $C\big([0, T]; [X_1, X_2]_{\frac{1}{2}}\big)$\footnote{Here, $[X_1,X_2]_{1/2}$ denotes the complex interpolation space of $X_1$ and $X_2$ with the interpolation exponent $\tfrac12$.}; see \cite{LM} for more details.

Let $Z$ be a function space consisting of functions $f=f(\cdot)$ defined on a subset of $\mathbb{R}$. For $\lambda>0$, we define the weighted space
$$Z_{\lambda} := \left\{ f \in Z \,\big|\, e^{\lambda t} f \in Z \right\} \;\textrm{ with the norm } \|f\|_{Z_{\lambda}} := \left( \|f\|_Z^2 + \big\| e^{\lambda t} f \big\|_Z^2 \right)^{\frac12}.$$

In what follows, for any time-space function $v=v(t,x)$, we often write $v(t)$ in place of $v(t,\cdot)$.  Furthermore, $C(\cdots)$, $C_1(\cdots)$, etc., stand for positive constants depending on the arguments in brackets.

\subsection{Quantitative rapid stabilization for multidimensional parabolic equations}\label{subsec1.2}

The first main result can be stated as follows.
\begin{theorem}\label{main-th-rapid-stab}
There exists a constant $\tilde{C}=\tilde{C}\left(\Omega,d,\omega,a,b\right)>0$ such that for any $\lambda>1$, there is a stationary feedback law $\mathcal{K}_{\lambda}\in\mathcal{L}(L^2(\Omega))$ so that the Cauchy problem \eqref{EQ-FIRST} with $u=\mathcal{K}_{\lambda}y$ admits a unique solution $y\in C([0,+\infty); L^2(\Omega))$ verifying
\begin{equation}\label{y-decay}
\|u(t)\|_{L^2(\Omega)}+\|y(t)\|_{L^2(\Omega)}\le e^{\tilde{C}\sqrt{\lambda}}e^{-{\lambda}t}\|y_0\|_{  L^2(\Omega)},\;\; \forall t>0.
\end{equation}
Moreover, the stabilizing feedback can be represented in the form $u=-\chi_\omega \mathcal P_\lambda y$, where $\mathcal P_\lambda\in\mathcal{L}(L^2(\Omega))$ is the solution of the operator Riccati equation 
\begin{equation}\label{PLAMBDA-61-RICCATI}
\left\langle (\Delta - a - b\cdot \nabla+\lambda I) {z},\, \mathcal P_\lambda  {z} \right\rangle_{L^2(\Omega)}-\frac12 \|\mathcal P_\lambda  {z}\|_{L^2(\omega)}^2+\frac12 \|\nabla  {z}\|_{L^2(\Omega)}^2=0,\quad \forall  {z}\in H^2(\Omega)\cap H_0^1(\Omega).\end{equation}
\end{theorem}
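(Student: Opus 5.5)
The strategy is to shift the operator and then use the LQ theory. Set $A:=\Delta-a-b\cdot\nabla$ with domain $H^2(\Omega)\cap H_0^1(\Omega)$. For a solution $y$ of \eqref{EQ-FIRST}, put $z:=e^{\lambda t}y$ and $v:=e^{\lambda t}u$, so that $z'=(A+\lambda I)z+\chi_\omega v$. For each $z_0\in L^2(\Omega)$ we consider the infinite-horizon LQ problem of minimizing
$$J(z_0;v)=\frac12\int_0^{\infty}\Big(\|\nabla z(t)\|_{L^2(\Omega)}^2+\|v(t)\|_{L^2(\omega)}^2\Big)\,dt .$$
The $H^1_0$-seminorm in the running cost is the special choice that produces the term $\frac12\|\nabla z\|^2$ in \eqref{PLAMBDA-61-RICCATI}. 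It also gives the closed-loop trajectory parabolic regularity: $z\in L^2(0,\infty;H_0^1)$.

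\textbf{Step 1: finiteness of the value function with a quantitative bound.} I construct an admissible control with cost $\le e^{C\sqrt\lambda}\|z_0\|^2$. The idea is to apply the null controllability cost \eqref{C-COST-INTRO}, which follows from \eqref{OBS-INTRO}, on a short horizon $T=\lambda^{-1/2}$ (assume $\lambda>1$, so $T<1$). The shift $+\lambda I$ only multiplies the cost on $(0,T)$ by $e^{\lambda T}=e^{\sqrt\lambda}$. Concretely, steer $y$ to zero at time $T$ with $\|u\|\le e^{C/T}\|y_0\|$, set $u=0$ afterwards, and take $v=e^{\lambda t}u$. Then
$$\|v\|_{L^2}\le e^{\lambda T}e^{C/T}\|z_0\| = e^{(C+1)\sqrt\lambda}\|z_0\| .$$
The energy estimate on $(0,T)$ gives $\int_0^T\|\nabla z\|^2\le C e^{C\lambda T}(\|z_0\|^2+\|v\|^2)$. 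Hence
$$V(z_0):=\inf_v J(z_0;v)\le e^{C\sqrt\lambda}\|z_0\|^2 .$$
This is the point where the balance $\lambda T=1/T$ yields $\sqrt\lambda$. Note that the observability inequality \eqref{OBS-INTRO} holds for non-self-adjoint operators, so no spectral structure is needed.

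\textbf{Step 2: Riccati operator and feedback.} Standard LQ theory (Lasiecka--Triggiani, with bounded control operator $B=\chi_\omega$ and an observation $\nabla$ that is admissible thanks to the parabolic smoothing) gives the following. There exists a unique optimal pair, and $V(z_0)=\frac12\langle \mathcal P_\lambda z_0,z_0\rangle$ with $\mathcal P_\lambda\ge0$ self-adjoint and $\|\mathcal P_\lambda\|\le 2e^{C\sqrt\lambda}$. The optimal control is $v=-\chi_\omega\mathcal P_\lambda z$. Differentiating $t\mapsto V(z(t))$ along the optimal trajectory, together with the dynamic programming principle, yields \eqref{PLAMBDA-61-RICCATI} for $z\in D(A)$. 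Undoing the shift gives the feedback $u=-\chi_\omega\mathcal P_\lambda y$, with $\mathcal K_\lambda=-\chi_\omega\mathcal P_\lambda$ bounded. Well-posedness of the closed loop then holds because it is a bounded perturbation of an analytic semigroup.

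\textbf{Step 3: decay estimate, the main obstacle.} Minimality gives $\int_0^\infty\|\nabla z\|^2\le e^{C\sqrt\lambda}\|z_0\|^2$, and by Poincaré also $\int_0^\infty\|z\|^2\le e^{C\sqrt\lambda}\|z_0\|^2$. The remaining difficulty is to convert this into a pointwise bound $\|z(t)\|\le e^{C\sqrt\lambda}\|z_0\|$ without losing a factor $e^{C\lambda}$ from the $+\lambda I$ growth. I would do this with a Lyapunov/energy argument using $\mathcal P_\lambda$ itself. The Riccati identity gives
$$\frac{d}{dt}\langle\mathcal P_\lambda z,z\rangle=-\|\nabla z\|^2-\|\mathcal P_\lambda z\|_{L^2(\omega)}^2\le 0 .$$
However, the lower bound $\mathcal P_\lambda\ge c$ is not automatic, so instead I use the plain energy identity
$$\frac{d}{dt}\|z\|^2=-2\|\nabla z\|^2+2\langle(\lambda-a-b\cdot\nabla)z,z\rangle-2\langle\chi_\omega\mathcal P_\lambda z,z\rangle .$$
Its right-hand side is bounded by $C(\lambda+\|\mathcal P_\lambda\|)\|z\|^2$. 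Integrate it over $(s,t)$ with $s$ ranging over $(t-1,t)$ and average, using the $L^2$-in-time bound. This controls $\|z(t)\|^2$ by $\int_{t-1}^t(\ldots)$, but with the crude factor $e^{C\sqrt\lambda}$ multiplied by $\lambda+\|\mathcal P_\lambda\|\le e^{C\sqrt\lambda}$, which is still of the form $e^{C\sqrt\lambda}$. For $t<1$ I use a short-window Grönwall on length $\lambda^{-1/2}$ so that the factor $\lambda\cdot\lambda^{-1/2}$ stays $\lesssim\sqrt\lambda$. The bound on $u=-\chi_\omega\mathcal P_\lambda y$ then follows from $\|\mathcal P_\lambda\|\le e^{C\sqrt\lambda}$, which proves \eqref{y-decay}.
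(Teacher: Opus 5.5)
Your Steps 1 and 2 coincide with the paper's argument (Lemmas \ref{PHIBOUND} and \ref{lemma3.1}). The short-time part of Step 3, however, fails as written.

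You bound the right-hand side of the energy identity by $C(\lambda+\|\mathcal P_\lambda\|)\|z\|^2$. In the averaging argument for $t\ge 1$ this is harmless, because the factor only multiplies $\int_{t-1}^t\|z\|^2$. In the Gr\"onwall step on $[0,\lambda^{-1/2}]$, however, it sits in the exponent. It gives $\exp\bigl(C(\lambda+\|\mathcal P_\lambda\|)\lambda^{-1/2}\bigr)$, not $e^{C\sqrt\lambda}$. The only available information is $\|\mathcal P_\lambda\|\le 2e^{C\sqrt\lambda}$, so this bound is doubly exponential in $\sqrt\lambda$.

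You also cannot discard the feedback term. The quantity $\langle\chi_\omega\mathcal P_\lambda z,z\rangle=\langle\mathcal P_\lambda z,\chi_\omega z\rangle$ has no sign, since $\chi_\omega$ and $\mathcal P_\lambda$ do not commute.

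The repair is simple. Keep the control as $v$ and estimate $2|\langle\chi_\omega v,z\rangle|\le\|v\|^2+\|z\|^2$, so that $\frac{d}{dt}\|z\|^2\le C\lambda\|z\|^2+\|v\|^2$. Combined with $\int_0^\infty\|v\|^2\le 2V(z_0)\le e^{C\sqrt\lambda}\|z_0\|^2$, Gr\"onwall on $[0,\lambda^{-1/2}]$ then costs only $e^{C\sqrt\lambda}$. Averaging over windows of length $\lambda^{-1/2}$ covers $t\ge\lambda^{-1/2}$. Alternatively, keep your bound but use windows of length $h$ comparable to $(\lambda+\|\mathcal P_\lambda\|)^{-1}$; then $1/h$ is still $e^{C\sqrt\lambda}$, and Gr\"onwall on $[0,h]$ costs $O(1)$.

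The paper needs no time-splitting at all. Contrary to your remark, the lower bound on $\mathcal P_\lambda$ is automatic for the $H_0^1$ running cost, with a constant independent of $\lambda$ (Lemma \ref{PHIBOUND-NEW-18}). Integrate the energy identity for the optimal pair over $(0,T_n)$ with $\|z^*(T_n)\|\to 0$, which is possible since $z^*\in L^2(0,\infty;L^2(\Omega))$. The contribution $-2\lambda\int_0^{T_n}\|z^*\|^2$ then has the favorable sign and can be dropped. Poincar\'e and Young absorb the $a$, $b$ and control terms, giving $\|y_0\|^2\le C\int_0^\infty(\|\nabla z^*\|^2+\|v^*\|^2)$.

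Combined with dynamic programming (equivalently, your identity $\frac{d}{dt}\langle\mathcal P_\lambda z,z\rangle\le 0$), this gives in one line
$\tilde C_2\|z^*(t)\|^2\le\Phi_2(z^*(t))\le\Phi_2(y_0)\le e^{\tilde C_1\sqrt\lambda}\|y_0\|^2$ for every $t\ge 0$.
This is exactly why the cost contains the gradient term (Remark \ref{cost-func-remark}). The same coercivity is reused in Lemma \ref{notfin} and in the Lyapunov argument of Section \ref{NS-QRS-SEC}.

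Your repaired route uses only the $L^2$-in-time bounds on $z$ and $v$. It would therefore also give the decay estimate for the low-gain cost ($\alpha=0$ in \eqref{Func-alpha}), where no such lower bound is available. For the theorem as stated, the paper's argument is shorter.
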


\begin{remark}\label{Remark-first-1}	
Several comments are given in order.
\begin{itemize}
\item[1.] This is the first result on the quantitative rapid stabilization for multidimensional non-self-adjoint parabolic equations. Note that the non-quantitative rapid stabilization of this model has been achieved in earlier works \cite{BT2, BW}.

\item[2.] Our method combines LQ theory directly with quantitative observability inequalities, and the analysis relies on two key ingredients. On one hand, we adopt a well-chosen cost functional
\begin{equation}\label{Func-alpha}
\mathcal J(y,u;y_0)=\frac12 \int_0^\infty e^{2\lambda t}\left(\|(-\Delta)^{\alpha} y(t)\|_{L^2(\Omega)}^2+\|u(t)\|_{L^2(\Omega)}^2\right)dt,\quad \alpha\in [0,1],
\end{equation}
and turn to the quantitative lower and upper bounds for the corresponding optimal control problem; {see Remark \ref{cost-func-remark}}. By choosing $\alpha=1/2$, we derive the lower bound with respect to the $L^2(\Omega)$ space presented in \eqref{LOWER-BOUND}. On the other hand, the observability inequality \eqref{OBS-INTRO} is used to obtain the quantitative upper bound stated in \eqref{PHI-bound-upp-523}, while the explicit blow-up rate of the cost is useful. Specifically, by applying the duality principle, we construct a  {feasible} control satisfying \eqref{C-COST-INTRO}, thereby establishing this bound.

The mechanism is different from that in most of the existing literature on the quantitative rapid stabilization. In many earlier works, quantitative estimates are obtained through explicit constructions based on spectral inequalities \cite{X1,X2}, kernel bounds \cite{CN,GHMXZ}, or other constructive techniques. Here, by contrast, the quantitative stabilization estimate is derived indirectly from a global observability inequality via the LQ framework. In this sense, the present approach is of a different nature from the more explicit constructions arising, for instance, in backstepping or frequency-Lyapunov methods. A related use of the Gramian method and control costs also appears in \cite{Nyu}.

\item[3.] In fact, when $a$ and $b$ also depend on time, the observability inequality \eqref{OBS-INTRO} still holds. Therefore, our result can be readily applied to the quantitative rapid stabilization of parabolic equations with time-dependent potential and convection terms. For the LQ framework for such nonautonomous equations, we refer the reader to \cite{BR}. The quantitative LQ framework developed in this paper can also be applied to boundary control problems, which will be addressed in our subsequent work. 
\end{itemize} 
\end{remark}

\subsection{An abstract equivalence between quantitative rapid stabilization and quantitative observability inequality}\label{ABS-SUBSET}
Our second main result is an {abstract} formulation of the above method. Let $X$ and $U$ be separable Hilbert spaces. Let $A:D(A)\subset X\to X$ be a strictly positive self-adjoint operator with a dense domain and compact resolvent. The perturbation operator $A_0$ belongs to $\mathcal{L}\big(D(A^{1/2}),X\big)$ and admits a continuous extension to $\mathcal{L}\big(X,D(A^{1/2})'\big)$. Let the control operator $B\in\mathcal L(U,X)$. We consider the linear control system
\begin{equation}\label{ABSTR-EQ-INTR}
\begin{cases}
\partial_t y+Ay+A_0y=Bu \quad \text{for } t>0,\\y(0)=y_0\in X,
\end{cases}
\end{equation}
where the control $u$ belongs to $L^2(0,+\infty;U)$. The equation \eqref{ABSTR-EQ-INTR} admits a unique solution $y\in C([0,+\infty);X)\cap L^2_{\text{loc}}(0,+\infty;D(A^{1/2}))$; see \cite{LM}.

\begin{definition}
 We say that the system \eqref{ABSTR-EQ-INTR} is {$e^{\mathcal O(\lambda^{\frac{p}{1+p}})}$-rapidly stabilizable} if there exists a constant $C>0$ such that, for every $\lambda>1$, one can find a feedback operator $K_\lambda\in \mathcal L(X,U)$, for which the solution of the closed-loop system with $u=K_\lambda y$ satisfies
\begin{equation}\label{q-rs-abstr}
\|u(t)\|_U+	\|y(t)\|_X\le e^{C\lambda^{\frac{p}{1+p}}}e^{-\lambda t}\|y_0\|_X,\quad \forall t>0.
\end{equation}
\end{definition}

\begin{definition}\label{Def-OBS-ABSTR}
 We say that the system \eqref{ABSTR-EQ-INTR} is {$e^{\mathcal O(\frac{1}{T^p})}$-observable} if there exists a constant $C>0$ such that, for every $T\in(0,1)$, the following observability estimate holds:
\begin{equation}\label{q-obs-abstr}
\|\phi(0)\|_X^2\le e^{\frac{C}{T^p}}\int_0^T \|B^*\phi(t)\|_U^2\,dt,
\end{equation}
for every solution $\phi$ to the adjoint system
$$\begin{cases}
\partial_t\phi-(A+A_0)^*\phi=0 \quad \text{for } t\in(0,T),\\
\phi(T)\in X.
\end{cases}$$
\end{definition}
Definition \ref{Def-OBS-ABSTR} introduces $e^{\frac{C}{T^p}}$-type observability inequalities, which quantify both the cost of recovering the initial state of the adjoint equation from small-time observations of $B^*\phi$ and, equivalently, the control cost of small-time null controllability for the original equation \eqref{ABSTR-EQ-INTR}. The blow-up rate of $e^{\frac{C}{T^p}}$ as $T \to 0^+$ has been extensively studied, starting with Seidman's work \cite{Seid} on the one-dimensional heat equation, which relies on solving  a ``window problem'' for series of complex exponentials. Subsequent developments include the moment method \cite{LP2,TT}, global Carleman estimates \cite{FZL,FI}, the spectral inequality method \cite{AEWZ,M}, the frequency-Lyapunov method \cite{X1,X2}, the frequency function method \cite{WK}, and propagation of smallness \cite{EMZ}. 

\begin{theorem}\label{EQV}
Let $p>0$. The control system \eqref{ABSTR-EQ-INTR} is $e^{\mathcal O(\lambda^{\frac{p}{1+p}})}\text{-rapidly stabilizable}$ if and only if it is $e^{\mathcal O(\frac{1}{T^p})}$-observable.
\end{theorem}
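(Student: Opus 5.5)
We prove the two implications separately. Throughout, set $\mathcal A:=A+A_0$. By a G\aa rding-type argument, $-\mathcal A$ generates an analytic semigroup on $X$ satisfying $\|e^{-t\mathcal A}\|\le e^{\mu t}$ for some $\mu\ge 0$.

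\emph{Observability $\Rightarrow$ stabilizability.} We follow the LQ scheme of Remark \ref{Remark-first-1}.

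Fix $\lambda>1$. We minimize the functional
\[
\mathcal J(y,u;y_0)=\tfrac12\int_0^\infty e^{2\lambda t}\big(\|A^{1/2}y\|_X^2+\|u\|_U^2\big)\,dt
\]
subject to \eqref{ABSTR-EQ-INTR}. Substituting $z=e^{\lambda t}y$ and $v=e^{\lambda t}u$ turns this into a standard LQ problem for the operator $\mathcal A-\lambda I$, with cost $\frac12\int_0^\infty(\|A^{1/2}z\|^2+\|v\|^2)\,dt$. The value function is therefore $\frac12\langle P_\lambda y_0,y_0\rangle$, where $P_\lambda$ solves the Riccati equation analogous to \eqref{PLAMBDA-61-RICCATI}. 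The optimal feedback is $u=-B^*P_\lambda y$, and the optimal trajectory makes the cost finite.

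\textbf{Upper bound.} We need a feasible control. Take $T=\lambda^{-1/(1+p)}$, which lies in $(0,1)$. By duality, \eqref{q-obs-abstr} yields a control $u$ on $(0,T)$, extended by zero afterwards, with $y(T)=0$ and $\|u\|_{L^2}^2\le e^{C/T^p}\|y_0\|^2$. The energy estimate for \eqref{ABSTR-EQ-INTR} gives
\[
\int_0^T\|A^{1/2}y\|^2\,dt\le C\big(\|y_0\|^2+\|u\|_{L^2}^2\big).
\]
Since $e^{2\lambda T}=e^{2\lambda^{p/(1+p)}}$ and $T^{-p}=\lambda^{p/(1+p)}$, we obtain
\[
\langle P_\lambda y_0,y_0\rangle\le e^{C\lambda^{p/(1+p)}}\|y_0\|^2 .
\]

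\textbf{Lower bound and decay.} Let $(y,u)$ be the optimal pair and $z=e^{\lambda t}y$. Test the $z$-equation with $z$:
\[
\tfrac{d}{dt}\tfrac12\|z\|^2+\|A^{1/2}z\|^2+\langle A_0z,z\rangle-\lambda\|z\|^2=\langle Bv,z\rangle .
\]
Using $\|z\|^2\le\lambda_1^{-1}\|A^{1/2}z\|^2$ only controls $\lambda\|z\|^2$ by $\lambda\lambda_1^{-1}\|A^{1/2}z\|^2$, which is not absorbable for large $\lambda$. So we argue differently. Integrate from $t$ to $\infty$; this requires $z\to0$ along a sequence, which follows from $\int_0^\infty\|z\|^2<\infty$. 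We get
\[
\tfrac12\|z(t)\|^2\le \int_t^\infty\big(\lambda\|z\|^2+C\|A^{1/2}z\|^2+C\|v\|^2\big)\,ds\le C\lambda\,\mathcal J\big(y(t)\big)\le C\lambda\, e^{C\lambda^{p/(1+p)}}\|z(t)\|^2 .
\]
In other words, dynamic programming gives the bound $\|e^{\lambda s}y(s)\|^2\le C\lambda\langle P_\lambda y(s),y(s)\rangle e^{2\lambda s}$. Separately, the value function $\mathcal V(s)=e^{2\lambda s}\langle P_\lambda y(s),y(s)\rangle$ is nonincreasing. Combining the two,
\[
e^{2\lambda t}\|y(t)\|^2\le C\lambda\,\mathcal V(0)\le C\lambda\, e^{C\lambda^{p/(1+p)}}\|y_0\|^2 .
\]
The factor $\lambda$ is absorbed into the exponential.

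\textbf{Feedback bound.} It remains to estimate $u=-B^*P_\lambda y$. For this we need $\|P_\lambda\|\le e^{C\lambda^{p/(1+p)}}$, which follows from the upper bound on $\langle P_\lambda y_0,y_0\rangle$ because $P_\lambda$ is self-adjoint and nonnegative. Hence
\[
\|u(t)\|\le\|B\|\,\|P_\lambda\|\,\|y(t)\|,
\]
and \eqref{q-rs-abstr} follows.

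\emph{Stabilizability $\Rightarrow$ observability.} Fix $T\in(0,1)$ and choose $\lambda=\kappa T^{-(1+p)}$, with $\kappa$ large. With the feedback $K_\lambda$, the closed-loop solution satisfies
\[
\|y(T)\|\le e^{C\lambda^{p/(1+p)}-\lambda T}\|y_0\|=e^{C\kappa^{p/(1+p)}T^{-p}-\kappa T^{-p}}\|y_0\|\le e^{-\kappa T^{-p}/2}\|y_0\|
\]
once $\kappa$ is large. The control cost is
\[
\|u\|_{L^2(0,T)}\le e^{C T^{-p}}\|y_0\|.
\]
This is a contraction-type approximate null control with cost $e^{CT^{-p}}$. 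We iterate it on the dyadic intervals $[T_k,T_{k+1}]$ of lengths $T2^{-k}$: the smallness gained on each step, $e^{-\kappa(T2^{-k})^{-p}/2}$, dominates the costs accumulated on the next steps, $e^{C(T2^{-k-1})^{-p}}$, provided $\kappa$ is chosen large compared with $2^pC$. This yields exact null controllability at time $2T$ with total cost $e^{C'T^{-p}}$. By duality, exact null controllability with this cost is equivalent to \eqref{q-obs-abstr}, after renaming $T$.

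We expect the main obstacle to be the lower bound in the first direction: extracting $\|y(t)\|_X$, and not merely $\int\|A^{1/2}y\|^2$, from the value function without losing more than a polynomial factor in $\lambda$. The choice of weight $A^{1/2}$ in $\mathcal J$, used together with the energy identity and dissipativity, is what makes this work. The bookkeeping of constants in the iteration of the converse direction is the second delicate point.
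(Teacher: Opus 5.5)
Your proposal is correct and follows essentially the paper's proof: the LQ problem with the $A^{1/2}$-weighted cost, an upper bound from the null control at $T=\lambda^{-1/(1+p)}$, and, for the converse, the dyadic iteration of feedbacks with rates of order $(T2^{-k})^{-(1+p)}$ followed by duality (the cost constant there really depends on $\kappa$ like $C\kappa^{p/(1+p)}$, which is harmless because it is sublinear in $\kappa$). The one thing to correct is your anticipated ``main obstacle'': in $\tfrac12\|z(t)\|^2=\int_t^\infty\bigl(\|A^{1/2}z\|^2+\langle A_0z,z\rangle-\lambda\|z\|^2-\langle Bv,z\rangle\bigr)\,ds$ the $\lambda$-term has a favorable sign and can simply be dropped, which gives the paper's $\lambda$-independent bound $\Phi_4(y_0)\ge C_4\|y_0\|^2$, so your extra factor $\lambda$ is unnecessary, though harmless.
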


\begin{remark}
Based on the equivalence between weak observability inequalities and exponential stabilization introduced by Tr\'elat-Wang-Xu \cite{TWX}, the authors in \cite{LWXY} further established a family of weak observability inequalities to characterize the rapid stabilization. Notably, the dependence among the parameters involved in these inequalities has not yet been clarified. In this work, we propose quantitative observability inequalities as a strengthened version of those in \cite{LWXY}. Using the explicit form of the observation cost, we obtain a sharper result on the quantitative rapid stabilization. A possible further direction is to obtain quantitative rapid stabilization from a sequence of weak observability inequalities. 
\end{remark}

\subsection{Further application to Navier--Stokes equations around non-trivial states}
As an application to nonlinear equations, we establish the quantitative rapid stabilization for the two-dimensional Navier--Stokes equations around steady states. Define
$$\mathcal H:=\{y\in L^2(\Omega)^2:\ \operatorname{div}y=0\text{ in }\Omega,\ y\cdot n=0\text{ on }\partial\Omega\}\quad \text{with }\|y\|_{\mathcal H}:=\|y\|_{L^2(\Omega)^2} \text{ for }y\in \mathcal H,$$ 
$$\mathcal H_1:=\{y\in H^1_0(\Omega)^2:\ \operatorname{div}y=0\text{ in }\Omega\}\quad \text{with }\|y\|_{\mathcal H_1}:=\|\nabla y\|_{L^2(\Omega)^2}\text{ for }y\in \mathcal H_1.$$  
Let $(y_e,p_e)\in H^2(\Omega)^2\times H^1(\Omega)$ be a solution of the stationary system
$$\left\{\begin{aligned}
&- \Delta y_e + (y_e\cdot \nabla)y_e + \nabla p_e = f && \text{in } \Omega,\\
&\nabla \cdot y_e = 0 && \text{in } \Omega,\\
&y_e = 0 && \text{on } \partial\Omega.
\end{aligned}\right.$$ 

We consider the following controlled Navier--Stokes equation
\begin{equation}\label{NS-EQ-INTRO}
\left\{\begin{aligned}
&\partial_t y -  \Delta y + (y\cdot \nabla)y + \nabla p = f+\chi_\omega u && \text{in } (0,+\infty)\times \Omega,\\
&\nabla \cdot y = 0 && \text{in }  (0,+\infty)\times \Omega,\\
&y = 0 && \text{on }   (0,+\infty)\times \partial \Omega,\\
&y(0,\cdot)=y_0&& \text{in } \Omega.
\end{aligned}\right.
\end{equation} 
The theorem below reveals the quantitative rapid stabilization of \eqref{NS-EQ-INTRO} around the steady state $y_e$.

\begin{theorem}\label{main-th-rapid-stab-NS}
There exists a constant $\overline{C}=\overline{C}\left(\Omega,\omega,y_e\right)>0$ with the following property: for any $\lambda>1$, there exists a stationary feedback law $\mathcal{K}_{\lambda}\in\mathcal{L}(\mathcal H,L^2(\Omega)^2)$ such that, for any $y_0-y_e\in \mathcal H_1$ satisfying $\|y_0-y_e\|_{\mathcal H}\le e^{-\overline{C}\sqrt{\lambda}}$, the equation \eqref{NS-EQ-INTRO} with $u=\mathcal{K}_{\lambda}(y-y_e)$ admits a unique strong solution $(y,p)$ verifying
$y-y_e\in C([0,+\infty);\mathcal H_1)$ and
\begin{equation}\label{y-decay-2-NS}
\|u(t)\|_{L^2(\Omega)^2}+\|y(t)-y_e\|_{\mathcal H}\le e^{\overline{C}\sqrt{\lambda}}e^{-\lambda t}\|y_0-y_e\|_{  \mathcal H},\;\; \forall t>0.
\end{equation}
\end{theorem}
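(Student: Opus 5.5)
The plan is to reduce Theorem \ref{main-th-rapid-stab-NS} to the linear LQ result for the Oseen-type linearization around $y_e$, and then handle the nonlinearity by a fixed-point or continuation argument in the weighted space $W(0,\infty;\cdot,\cdot)_{\lambda}$. Set $z=y-y_e$. Projecting by the Leray projector $\mathbb P$, $z$ solves
\[
\partial_t z+Az+A_0z+\mathbb P\big((z\cdot\nabla)z\big)=\mathbb P(\chi_\omega u),
\]
where $A=-\mathbb P\Delta$ is the Stokes operator (strictly positive, self-adjoint, compact resolvent on $\mathcal H$, with $D(A^{1/2})=\mathcal H_1$) and $A_0z=\mathbb P\big((y_e\cdot\nabla)z+(z\cdot\nabla)y_e\big)$. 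Since $y_e\in H^2$, $A_0\in\mathcal L(\mathcal H_1,\mathcal H)$, and by duality it extends to $\mathcal L(\mathcal H,\mathcal H_1')$. We take $B=\mathbb P\chi_\omega$. The abstract framework of Section \ref{ABS-SUBSET} therefore applies.

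The quantitative observability needed is the known Carleman-based estimate for the adjoint Oseen system (Fern\'andez-Cara--Guerrero--Imanuvilov--Puel, Imanuvilov): $\|\phi(0)\|^2\le e^{C/T}\int_0^T\|\phi\|^2_{L^2(\omega)}$. This is $e^{\mathcal O(1/T)}$-observability, i.e.\ $p=1$. I will invoke it and then follow the construction behind Theorem \ref{EQV} and Theorem \ref{main-th-rapid-stab}, using the cost functional \eqref{Func-alpha} with $\alpha=1/2$ (with $A^{1/2}$ in place of $(-\Delta)^{1/2}$). This produces a Riccati operator $\mathcal P_\lambda$ and a feedback $u=-\chi_\omega\mathcal P_\lambda z$.

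The feedback comes with more than the $\mathcal H$-decay $e^{C\sqrt\lambda}e^{-\lambda t}$. From the LQ value function $\frac12\langle\mathcal P_\lambda z_0,z_0\rangle$, which lies between $c\|z_0\|^2$ and $e^{C\sqrt\lambda}\|z_0\|^2$, we also get a weighted dissipation bound:
\[
\int_0^\infty e^{2\lambda t}\|z\|_{\mathcal H_1}^2\,dt\le e^{C\sqrt\lambda}\|z_0\|^2 .
\]
This bound is exactly the estimate a nonlinear term needs.

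For the nonlinear problem I would use $V(z)=\langle\mathcal P_\lambda z,z\rangle$ as a Lyapunov function. The identity \eqref{PLAMBDA-61-RICCATI} gives
\[
\frac{d}{dt}V+2\lambda V+\|z\|_{\mathcal H_1}^2+\|\chi_\omega\mathcal P_\lambda z\|^2\le 2|\langle\mathcal P_\lambda z,(z\cdot\nabla)z\rangle| .
\]
The right-hand side is controlled by $\|\mathcal P_\lambda\|\,\|z\|_{L^4}\|\nabla z\|\,\|z\|_{L^4}$. By 2D Ladyzhenskaya this is at most $e^{C\sqrt\lambda}\|z\|\,\|z\|_{\mathcal H_1}^2$. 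Alternatively, we can bound $\|\mathcal P_\lambda z\|_{\mathcal H_1}$ by using parabolic regularity of the optimal trajectory, and then use the trilinear estimate $|b(z,z,w)|\le C\|z\|\,\|z\|_{\mathcal H_1}\|w\|_{\mathcal H_1}$.

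As long as $\|z(t)\|\le e^{-C'\sqrt\lambda}$, the right-hand side is absorbed by $\|z\|_{\mathcal H_1}^2$. We then get $V(t)\le e^{-2\lambda t}V(0)$, and hence $\|z(t)\|\le e^{C\sqrt\lambda}e^{-\lambda t}\|z_0\|$. A bootstrap closes the argument: since $\|z_0\|\le e^{-\overline C\sqrt\lambda}$ with $\overline C$ large, the smallness condition is never violated. The bound on $u$ then follows from $\|u\|\le\|\mathcal P_\lambda\|\,\|z\|$.

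Existence of a unique strong solution with $z\in C([0,\infty);\mathcal H_1)$ follows from standard 2D Navier--Stokes theory for $z_0\in\mathcal H_1$, because the feedback is a bounded perturbation. The main obstacle I expect is getting a bound on $\|\mathcal P_\lambda\|_{\mathcal L(\mathcal H)}$, or on $\mathcal P_\lambda$ as a map into $\mathcal H_1$, that is only $e^{C\sqrt\lambda}$ and not worse. Without it the absorption step fails. I would obtain it from the upper bound on the optimal cost, which is built from the feasible control given by the $e^{C/T}$ observability with $T\sim\lambda^{-1/2}$. For the $\mathcal H_1$ version I would use smoothing on a time interval of length $1/\lambda$.
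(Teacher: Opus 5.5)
Your architecture matches the paper's: the Oseen reduction, $\mathcal P_\lambda$ from the $\alpha=1/2$ LQ problem via Theorem \ref{EQV}, the Lyapunov function $\langle\mathcal P_\lambda z,z\rangle$ with the Riccati identity, absorption under an $e^{-C\sqrt\lambda}$ smallness condition, and a bootstrap. The gap is in the absorption step, which does not work the way you primarily propose. The bound $|\langle\mathcal P_\lambda z,(z\cdot\nabla)z\rangle|\le\|\mathcal P_\lambda\|\,\|z\|_{L^4}^2\|\nabla z\|$ with $\|\mathcal P_\lambda\|=\|\mathcal P_\lambda\|_{\mathcal L(\mathcal H)}$ is not available. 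H\"older gives $\|z\|_{L^4}\|\nabla z\|\,\|\mathcal P_\lambda z\|_{L^4}$, and replacing $\|\mathcal P_\lambda z\|_{L^4}$ by $\|\mathcal P_\lambda\|\,\|z\|_{L^4}$ assumes $\mathcal P_\lambda$ is bounded on $L^4$, which the LQ construction does not provide. With only $\mathcal P_\lambda\in\mathcal L(\mathcal H)$ you must put $(z\cdot\nabla)z$ in $L^2$. That costs $\|z\|^{1/2}\|z\|_{\mathcal H_1}\|Az\|^{1/2}$, while the Riccati identity supplies only $\|z\|_{\mathcal H_1}^2$ dissipation, so the term cannot be absorbed. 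The estimate $\|\mathcal P_\lambda\|_{\mathcal L(\mathcal H_1)}\le e^{C\sqrt\lambda}$, i.e.\ \eqref{P-RI-NS-BOUND-V-H}, is therefore not an alternative but the indispensable ingredient. It lets you move the derivative onto $\mathcal P_\lambda z$ and use \eqref{NONlinear-est-NS} to get $e^{C\sqrt\lambda}\|z\|\,\|z\|_{\mathcal H_1}^2$. By contrast, the $\mathcal L(\mathcal H)$ bound you call the main obstacle follows at once from the upper bound on the value function: $\|\mathcal P_\lambda\|_{\mathcal L(\mathcal H)}\le 2e^{\tilde C_1\sqrt\lambda}$.

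Your one-line sketch of the $\mathcal H_1$ bound (``parabolic regularity of the optimal trajectory'', ``smoothing'') omits the mechanism that actually reaches $\mathcal P_\lambda w_0$. The state sees $\mathcal P_\lambda z$ only through $\chi_\omega\mathcal P_\lambda z$ in the control. In Lemma \ref{lemma3.1-NS}, for $w_0\in\mathcal H_1$, the paper takes the optimal pair $(z,v)$ and the adjoint state $q$ of the optimality system on $(0,T)$, $T=\lambda^{-1/2}$. The adjoint solves $-\partial_tq+Aq+A_0^*q-\lambda q=-Az$ with $q(T)=-\mathcal P_\lambda z(T)$. The observability-based identification from the proof of Theorem \ref{EQV} gives $q(t)=-\mathcal P_\lambda z(t)$, hence $\mathcal P_\lambda w_0=-q(0)$. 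The proof then runs in three estimates:
\begin{itemize}
\item An $\mathcal H_1$ energy estimate for the state, fed by $\int_0^\infty\|v\|^2\,dt\le 2e^{\tilde C_1\sqrt\lambda}\|w_0\|_{\mathcal H}^2$, gives $\sup_{t\le T}\|z(t)\|_{\mathcal H_1}^2+\int_0^T\|Az\|^2\,dt\le e^{C\sqrt\lambda}\|w_0\|_{\mathcal H_1}^2$, so the adjoint source lies in $L^2(0,T;\mathcal H)$.
\item For $\tilde q(t)=q(T-t)$, an $L^2(0,T;\mathcal H_1)$ estimate follows, with the terminal datum controlled by the $\mathcal L(\mathcal H)$ bound.
\item Testing the $\tilde q$ equation with $2tA\tilde q$ then yields $\|q(0)\|_{\mathcal H_1}\le T^{-1/2}e^{C\sqrt\lambda}\|w_0\|_{\mathcal H_1}$.
\end{itemize}
Your suggested interval of length $1/\lambda$ would also work in this scheme.

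A smaller point concerns the observability citation. As recalled in Remark \ref{Remark-NS-1}, the Carleman estimate of \cite{FCGIP} gives cost $e^{C/T^4}$, which through Theorem \ref{EQV} yields only $e^{C\lambda^{4/5}}$. The $e^{C/T}$ cost you need for $e^{\overline C\sqrt\lambda}$ is the Buffe--Takahashi estimate \cite{BT}.
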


\begin{remark}\label{Remark-NS-1}
The proof of Theorem \ref{main-th-rapid-stab-NS} relies on the following quantitative observability inequality for the adjoint Oseen system associated with the linearized Navier--Stokes equations:
\begin{equation*}\label{NS-OBS}
\|\varphi(0,\cdot)\|_{\mathcal H} \le e^{C(\Omega,\omega,y_e)/T} \|\varphi\|_{L^2((0,T)\times \omega)^2}.
\end{equation*}
The quantitative observability inequality for Oseen systems is implied in \cite{FCGIP} by means of global Carleman estimates with cost $e^{C/T^4}$. More recently, the cost $e^{C/T^4}$ has been improved by Buffe and Takahashi \cite{BT} to $e^{C/T}$ by estimating the pressure term on the boundary differently. In \cite[Corollary 1.5]{BT} the result is stated for strong solutions; however, using standard arguments, their result can be easily extended to weak solutions.
\end{remark}

\begin{remark}\label{Remark-NS-2}
Hydrodynamic stability is an important topic, concerned with the stability of flows near a steady state or a Couette flow, and is of practical relevance. In particular, stabilization by means of finite-dimensional feedback laws is especially meaningful, both from the theoretical and practical perspectives; relevant results on finite-dimensional rapidly stabilizing feedback laws for the Navier--Stokes equations can be found in \cite{BT2,BR,X1}.
\end{remark}

\subsection{A finite-dimensional feedback extension for the  heat equation and an open problem}
As demonstrated in Theorem \ref{main-th-rapid-stab}, the feedback control law is constructed via an infinite-dimensional operator Riccati equation. In fact, for the heat equation, our quantitative LQ framework is also applicable to finite-dimensional controls expressed as follows, and it recovers the quantitative result by Xiang \cite{X2},
$$u(t,x)=\sum_{1\leq j\leq N} u_j(t)\varphi_j(x).$$
The precise statement is given in Section~\ref{QRS-FINITE}. Finite-dimensional feedback controls are particularly relevant in practice, as they involve only finitely many control channels and are thus easier to implement.

Nevertheless, this finite-dimensional extension relies heavily on the self-adjoint structure of the Laplacian. This method can also be applied to the finite-dimensional quantitative rapid stabilization for the Navier--Stokes equations around the zero steady state. Whether a finite-dimensional feedback law can be established for the non-self-adjoint parabolic equation \eqref{EQ-FIRST} still remains unclear. In particular, whether the frequency-Lyapunov approach \cite{X2} that relies on spectral estimates can be used for non-self-adjoint cases is still largely open. 
\vspace{2mm}

\noindent \textbf{Organization of the paper.}
The rest of the paper is organized as follows. Section \ref{QRS-INF} establishes the quantitative rapid stabilization for non-self-adjoint parabolic equations, thereby proving Theorem \ref{main-th-rapid-stab}. Section \ref{EQUV-SEC} is devoted to proving the equivalence between quantitative rapid stabilization and quantitative observability,  i.e., Theorem \ref{EQV}. Section \ref{NS-QRS-SEC} achieves the quantitative rapid stabilization of \eqref{NS-EQ-INTRO} around steady states, corresponding to Theorem \ref{main-th-rapid-stab-NS}. Section \ref{QRS-FINITE} constructs a finite-dimensional feedback law for the heat equation.

\section{Quantitative rapid stabilization for non-self-adjoint parabolic equations}\label{QRS-INF}

This section is devoted to proving Theorem \ref{main-th-rapid-stab} and illustrating our quantitative LQ framework. To this end, we formulate the following LQ optimal control problem 
\begin{equation}\label{Co-FUNC-INTRO}
\Phi_1(y_0):= {\inf}\left\{\mathcal J_1(y,u;y_0)\;\middle|\; 
\begin{array}{l}
(y,u)\in W_{{\lambda }}(0,+\infty;H_0^1(\Omega),H^{-1}(\Omega))\times L^2_{\lambda}(0,+\infty;L^2(\Omega)),\\[1mm]
(y,u)\ \text{satisfies \eqref{EQ-FIRST}}
\end{array}\right\},
\end{equation}
where the cost functional is
\begin{equation}\label{cost-infiite-intro}
\mathcal J_1(y,u;y_0)=\frac12\int_0^\infty e^{2\lambda t}\left(\|\nabla y(t)\|_{L^2(\Omega)}^2+\|u(t)\|_{L^2(\Omega)}^2\right)\,dt.
\end{equation}
The weighted spaces $W_{\lambda}$ and $L^2_{\lambda}$ are as introduced at the end of Subsection~\ref{subsec1.1}: they consist of functions $f$ such that $e^{\lambda t}f$ belongs to the corresponding unweighted spaces. By constructing an equivalent problem for \eqref{Co-FUNC-INTRO}, we achieve the stabilization within the framework of classical LQ theory. The core of our quantitative analysis is to establish lower and upper bounds for these two problems. Explicit derivation of the lower bound relies on energy estimates, whereas the upper bound is deduced from quantitative observability inequalities.\\[-2mm]

Let $\lambda>1$. We introduce the following transformation
$$z(t)=e^{\lambda t}y(t), \qquad v(t)=e^{\lambda t}u(t).$$
This reformulates \eqref{EQ-FIRST} as
\begin{equation}\label{TRANS-EQ}\begin{cases}
\partial_tz-\Delta z+az+b\cdot \nabla z-\lambda z=\chi_{\omega}v & \text{in } (0,+\infty)\times \Omega,\\[2mm]
z=0& \text{on }(0,+\infty)\times \partial\Omega,\\[2mm]
z( 0,\cdot)=y_0\in L^2(\Omega).
\end{cases}
\end{equation}
Therefore, in order to achieve the rapid stabilization for the original equation \eqref{EQ-FIRST}, it suffices to establish the exponential stabilization for \eqref{TRANS-EQ}.

We now formulate the optimal control problem
\begin{equation}\label{COFUC-NEW-1}
\Phi_2(y_0):= {\inf}\left\{\mathcal J_2(z,v;y_0)\;\middle|\;
\begin{array}{l}
(z,v)\in W(0,+\infty;H_0^1(\Omega),H^{-1}(\Omega))\times L^2(0,+\infty;L^2(\Omega)),\\[1mm]
(z,v)\ \text{satisfies \eqref{TRANS-EQ}}
\end{array}\right\},
\end{equation}
where
\begin{equation}\label{CO-FUNCAL-1}
\mathcal J_2(z,v;y_0)=\frac12\int_0^\infty\left(\|\nabla z(t)\|_{L^2(\Omega)}^2+\|v(t)\|_{L^2(\Omega)}^2\right)\,dt.
\end{equation} 
It is straightforward to verify that
$$\Phi_2(y_0)=\Phi_1(y_0), \;\;\forall y_0\in L^2(\Omega).$$

We first investigate the upper bound for \eqref{Co-FUNC-INTRO} (equivalently, \eqref{COFUC-NEW-1}). To this end, we construct a feasible control starting from \eqref{OBS-INTRO}. For the parabolic equation \eqref{EQ-FIRST}, this inequality has been proved through a variety of techniques, notably Carleman estimates \cite{FZL,FI}, spectral inequalities \cite{LL,LR}, the frequency function method \cite{WK}, and the propagation of smallness for analytic functions \cite{AEWZ,EMZ}. By the classical duality principle between observability and null controllability, this estimate immediately yields a null control with an explicit cost.

\begin{lemma}\label{Lemma-con-cost-sec2}
Let $T=\dfrac{1}{\sqrt{\lambda}}$. There exists a control $u\in L^2((0,T)\times\Omega)$ such that the corresponding solution to \eqref{EQ-FIRST} satisfies $y(T,\cdot)=0$. Moreover, the associated control cost satisfies
\begin{equation}\label{COst-con-523}
\|u\|_{L^2((0,T)\times\Omega)}\le e^{C(\Omega,d,\omega,a,b)\sqrt{\lambda}}\|y_0\|_{L^2(\Omega)}.
\end{equation}
\end{lemma}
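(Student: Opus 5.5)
The plan is to apply the observability inequality \eqref{OBS-INTRO} with $T=1/\sqrt{\lambda}$ and then use the standard duality (HUM) construction. Since $\lambda>1$, we have $T\in(0,1)$, so \eqref{OBS-INTRO} is available and reads
$$\|\varphi(0)\|_{L^2(\Omega)}\le e^{C\sqrt{\lambda}}\|\varphi\|_{L^2((0,T)\times\omega)}$$
for every solution $\varphi$ of the adjoint problem \eqref{ADJOINT-INTRO-FIR}, where $C=C(\Omega,d,\omega,a,b)$.

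The construction is variational. For $\varphi_T\in L^2(\Omega)$, let $\varphi$ be the solution of \eqref{ADJOINT-INTRO-FIR} with $\varphi(T)=\varphi_T$. We minimize
$$J(\varphi_T)=\frac12\int_0^T\!\!\int_\omega|\varphi|^2\,dx\,dt+\langle y_0,\varphi(0)\rangle_{L^2(\Omega)}.$$
This functional is convex and continuous on $L^2(\Omega)$. It is not coercive in the $L^2$ norm of $\varphi_T$; the observability inequality only gives coercivity in the norm $\|\varphi\|_{L^2((0,T)\times\omega)}$. To get around this, we work on the completion $H$ of $L^2(\Omega)$ with respect to the seminorm $\varphi_T\mapsto\|\varphi\|_{L^2((0,T)\times\omega)}$. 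By unique continuation, or directly from \eqref{OBS-INTRO} combined with backward uniqueness, this seminorm is a norm. The linear term $\varphi_T\mapsto\langle y_0,\varphi(0)\rangle$ extends continuously to $H$ with norm at most $e^{C\sqrt{\lambda}}\|y_0\|$, again by \eqref{OBS-INTRO}. Hence $J$ has a unique minimizer $\hat\varphi$ in $H$, and we set $u=\chi_\omega\hat\varphi$.

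Alternatively, and more simply, we can minimize the penalized functional $J_\varepsilon=J+\tfrac{\varepsilon}{2}\|\varphi_T\|^2$ on $L^2(\Omega)$. This yields approximate controls $u_\varepsilon$ satisfying $\|y_\varepsilon(T)\|\le\sqrt{\varepsilon}\,e^{C\sqrt\lambda}\|y_0\|$ and $\|u_\varepsilon\|\le e^{C\sqrt\lambda}\|y_0\|$. A weak limit as $\varepsilon\to0$ then produces the desired null control, and the cost bound passes to the limit by weak lower semicontinuity.

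To verify the two properties, multiply \eqref{EQ-FIRST} by $\varphi$ and integrate by parts over $(0,T)\times\Omega$. This gives the duality identity
$$\langle y(T),\varphi_T\rangle-\langle y_0,\varphi(0)\rangle=\int_0^T\!\!\int_\omega u\varphi\,dx\,dt.$$
The boundary terms vanish, and the convection term $b\cdot\nabla y$ is paired against $-\nabla\cdot(b\varphi)$; the assumption $\operatorname{div}b\in L^\infty$ guarantees that the adjoint is well posed. The Euler–Lagrange equation for the minimizer states that the right-hand side plus $\langle y_0,\varphi(0)\rangle$ vanishes for all $\varphi_T$, so $y(T)=0$. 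For the cost, test the Euler–Lagrange equation with $\hat\varphi$ itself:
$$\|u\|^2_{L^2}=-\langle y_0,\hat\varphi(0)\rangle\le\|y_0\|\,e^{C\sqrt\lambda}\|u\|_{L^2}.$$
This gives \eqref{COst-con-523}.

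The only delicate point is justifying the minimization when coercivity holds only in the weak seminorm. The $\varepsilon$-penalization route avoids this cleanly, so I would use it. Everything else follows from the substitution $T=\lambda^{-1/2}$ in \eqref{OBS-INTRO}.
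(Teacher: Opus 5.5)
Your proposal is correct and is the argument the paper has in mind. The paper substitutes $T=\lambda^{-1/2}\in(0,1)$ into \eqref{OBS-INTRO}, so that $e^{C/T}=e^{C\sqrt{\lambda}}$, and then cites the classical duality between observability and null controllability without writing it out; your HUM minimization with $\varepsilon$-penalization, including testing the Euler--Lagrange equation with the minimizer to obtain the cost bound, is the standard proof of that step. (Minor aside: $\operatorname{div} b\in L^\infty(\Omega)$ is not what makes the adjoint problem well posed, since $\nabla\cdot(b\varphi)\in H^{-1}(\Omega)$ is a lower-order perturbation handled by the usual coercivity-up-to-a-shift estimate; that assumption enters in the proof of \eqref{OBS-INTRO} itself, and this does not affect your argument.)
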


The control constructed in Lemma \ref{Lemma-con-cost-sec2} serves as a feasible control for \eqref{Co-FUNC-INTRO}, and the control cost is crucial for establishing the quantitative upper bound below.

\begin{lemma}\label{PHIBOUND}
There exists a constant $\tilde{C}_1=\tilde{C}_1(\Omega,\omega,d,a,b)>0$ such that
\begin{equation}\label{PHI-bound-upp-523}
{\Phi_1} (y_0) \le e^{\tilde C_1 \sqrt{\lambda}}\|y_0\|_{L^2(\Omega)}^2,\;\;\forall y_0\in L^2(\Omega).
\end{equation}
\end{lemma}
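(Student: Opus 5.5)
We bound $\Phi_1(y_0)$ from above by the cost of one explicit admissible pair. Take $T=1/\sqrt{\lambda}$ and let $u$ be the null control from Lemma~\ref{Lemma-con-cost-sec2}, extended by zero for $t\ge T$. Let $y$ be the corresponding solution of \eqref{EQ-FIRST}. Since $y(T)=0$ and the control vanishes afterwards, uniqueness gives $y\equiv 0$ on $[T,+\infty)$. Both $y$ and $u$ are therefore supported in $[0,T]$, so the weights $e^{\lambda t}$ cause no integrability problem. Moreover, $y\in W(0,T;H_0^1,H^{-1})$ glues with the zero function at $t=T$, because $y(T)=0$ and the traces match. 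Hence $(y,u)$ lies in $W_\lambda(0,+\infty;H_0^1(\Omega),H^{-1}(\Omega))\times L^2_\lambda(0,+\infty;L^2(\Omega))$ and is admissible for \eqref{Co-FUNC-INTRO}, which gives $\Phi_1(y_0)\le \mathcal J_1(y,u;y_0)$.

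On $[0,T]$ we have $e^{2\lambda t}\le e^{2\lambda T}=e^{2\sqrt\lambda}$. This is where the choice $T=\lambda^{-1/2}$ balances the weight against the $e^{C/T}$ observability cost. Consequently
$$\mathcal J_1(y,u;y_0)\le \tfrac12 e^{2\sqrt\lambda}\Big(\|\nabla y\|^2_{L^2((0,T)\times\Omega)}+\|u\|^2_{L^2((0,T)\times\Omega)}\Big).$$
The control term is bounded by $e^{2C\sqrt\lambda}\|y_0\|^2$ through \eqref{COst-con-523}.

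For the state term we use the standard energy estimate. Multiply \eqref{EQ-FIRST} by $y$ and integrate. The convection term satisfies $\int (b\cdot\nabla y)y=-\frac12\int(\operatorname{div}b)\,y^2$, and it is bounded by $\|\operatorname{div} b\|_\infty\|y\|^2$. This yields
$$\tfrac{d}{dt}\|y\|^2+2\|\nabla y\|^2\le C_0\|y\|^2+\|u\|^2,$$
with $C_0=C(\|a\|_\infty,\|\operatorname{div}b\|_\infty)$. Grönwall on $[0,T]$ with $T<1$ then gives
$$\|\nabla y\|^2_{L^2((0,T)\times\Omega)}\le e^{C_0}\big(\|y_0\|^2+\|u\|^2_{L^2((0,T)\times\Omega)}\big)\le e^{C_0}\big(1+e^{2C\sqrt\lambda}\big)\|y_0\|^2.$$

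Combining the two bounds,
$$\Phi_1(y_0)\le \tfrac12 e^{2\sqrt\lambda}\,(e^{C_0}+1)\big(1+e^{2C\sqrt\lambda}\big)\|y_0\|^2.$$
Since $\lambda>1$, all the constants are absorbed into $e^{\tilde C_1\sqrt\lambda}$ with $\tilde C_1=\tilde C_1(\Omega,\omega,d,a,b)$, which proves \eqref{PHI-bound-upp-523}.

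The only delicate point is checking the regularity of the glued trajectory, i.e. that it belongs to the weighted space $W_\lambda$. This is immediate because $y(T)=0$ and the support is compact. The main quantitative input, the observability cost $e^{C/T}$, is already contained in Lemma~\ref{Lemma-con-cost-sec2}.
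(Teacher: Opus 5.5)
Your proof is correct and follows essentially the same route as the paper. Both arguments take the null control of Lemma~\ref{Lemma-con-cost-sec2} with $T=\lambda^{-1/2}$, bound $\int_0^T\|\nabla y\|_{L^2(\Omega)}^2\,dt$ by an energy/Gr\"onwall estimate on $[0,T]$, use $e^{2\lambda t}\le e^{2\sqrt{\lambda}}$ there, and take the zero extension as a feasible pair; the only cosmetic difference is that you handle the convection term by integrating by parts with $\operatorname{div} b\in L^\infty(\Omega)$ rather than by Young's inequality, which is equally valid.
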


\begin{proof}
We denote by $u$ and $y$, respectively, the control and the corresponding solution to the equation \eqref{EQ-FIRST} obtained in Lemma \ref{Lemma-con-cost-sec2}. Multiplying \eqref{EQ-FIRST} by $2y$, we have
$$\frac{d}{dt}\left(\|y(t)\|_{L^2(\Omega)}^2\right)= -2\|\nabla y(t)\|_{L^2(\Omega)}^2- 2\langle a y(t), y(t)\rangle_{L^2(\Omega)}- 2\langle b \cdot \nabla y(t), y(t)\rangle_{L^2(\Omega)}+ 2\langle \chi_\omega u(t), y(t)\rangle_{L^2(\Omega)}.$$
By Young's inequality, there exists a constant $C_0=C_0(a,b)>0$ such that
$$\frac{d}{dt}\|y(t)\|_{L^2(\Omega)}^2\le-\|\nabla y(t)\|_{L^2(\Omega)}^2+C_0\|y(t)\|_{L^2(\Omega)}^2+\|u(t)\|_{L^2(\Omega)}^2.$$
Integrating over $t \in (0,T)$ and using the control cost estimate \eqref{COst-con-523}, we get 
$$\|y(t)\|_{L^2(\Omega)}^2 + \int_0^t \|\nabla y(s)\|_{L^2(\Omega)}^2 ds\leq \|y_0\|_{L^2(\Omega)}^2+ C_0\int_0^t \|y(s)\|_{L^2(\Omega)}^2 ds+  e^{2C(\Omega,d,\omega,a,b)\sqrt{\lambda}} \|y_0\|_{L^2(\Omega)}^2.$$

By Gr\"onwall's inequality, 
$$\|y(t)\|_{L^2(\Omega)}^2\leq 2 e^{2C(\Omega,d,\omega,a,b)\sqrt{\lambda}} \|y_0\|_{L^2(\Omega)}^2e^{\int_0^{\frac{1}{\sqrt{\lambda}}} C_0 ds}\leq e^{C_1(\Omega,d,\omega,a,b)\sqrt{\lambda}} \|y_0\|_{L^2(\Omega)}^2.$$
Substituting back into the previous inequality, we immediately obtain that
$$\int_0^T \|\nabla y(t)\|_{L^2(\Omega)}^2 dt\leq e^{C_2( \Omega,d,\omega,a,b)\sqrt{\lambda}} \|y_0\|_{L^2(\Omega)}^2.$$
Finally, using the control cost estimate again, we easily deduce
$$\frac{1}{2} \int_0^{\frac{1}{\sqrt{\lambda}}} e^{2\lambda t}\left(\|\nabla y(t)\|_{L^2(\Omega)}^2 + \|u(t)\|_{L^2(\Omega)}^2\right) dt\leq e^{C_3(\Omega,d,\omega,a,b)\sqrt{\lambda}} \|y_0\|_{L^2(\Omega)}^2.$$

Define $\widetilde{u} = \begin{cases} u & \text{on } (0,T], \\ 0 & \text{on } (T,+\infty). \end{cases}$ Then the corresponding solution is $\widetilde{y} = \begin{cases} y & \text{on } (0,T], \\ 0 & \text{on } (T,+\infty). \end{cases}$
Hence,
$$\frac{1}{2} \int_0^\infty e^{2\lambda t}\left(\|\nabla \widetilde{y}(t)\|_{L^2(\Omega)}^2 + \|\widetilde{u}(t)\|_{L^2(\Omega)}^2\right) dt\leq e^{C_3(\Omega,d,\omega,a,b)\sqrt{\lambda}} \|y_0\|_{L^2(\Omega)}^2.$$
Since $y(T)=0$ in $L^2(\Omega)$, the zero extension $\widetilde y$ belongs to
$W(0,+\infty;H_0^1(\Omega),H^{-1}(\Omega))$. It readily follows that $(\widetilde y,\widetilde u)$ is a feasible pair for $\Phi_1(y_0)$. By the definition of $\Phi_1(y_0)$, we complete the proof.
\end{proof}

\begin{remark}\label{obs-quanti-remark}
Instead of directly using the observability inequality, we transform it into a null controllability result with an explicit control cost. By constructing a feasible control, an upper bound for the optimal control problem is derived, and the quantitative form of this upper bound follows from the quantitative control cost. The quantitative observability inequality has been extensively studied in the literature, including classical parabolic equations \eqref{EQ-FIRST}, higher-order parabolic equations \cite{EMZ}, degenerate parabolic equations \cite{LWYZ}, Oseen systems \cite{BT,FCGIP}, and other related models.
\end{remark}

The lower bound can be obtained via a straightforward energy estimate.

\begin{lemma}\label{PHIBOUND-NEW-18}
There exists a constant $\tilde{C}_2=\tilde{C}_2(\Omega,d,a,b)>0$ such that
\begin{equation}\label{PHI-bound-low-523} 
{\Phi_2} (y_0) \ge \tilde C_2\|y_0\|_{L^2(\Omega)}^2,\;\;\forall y_0\in L^2(\Omega).
\end{equation}
\end{lemma}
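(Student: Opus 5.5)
The plan is a direct energy identity for \eqref{TRANS-EQ}, integrated over the whole half-line. In that identity the $\lambda$-term has a favourable sign and can be discarded, which is why $\tilde C_2$ will not depend on $\lambda$.

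\textbf{Step 1: fix a feasible pair and its behaviour at infinity.} Take any feasible pair $(z,v)$ for \eqref{COFUC-NEW-1}. If no feasible pair exists, then $\Phi_2(y_0)=+\infty$ and there is nothing to prove. Since $z\in W(0,+\infty;H_0^1(\Omega),H^{-1}(\Omega))$, the Lions--Magenes lemma \cite{LM} gives two facts. First, $t\mapsto\|z(t)\|_{L^2(\Omega)}^2$ is absolutely continuous, with
$$\frac{d}{dt}\|z(t)\|^2_{L^2(\Omega)}=2\langle \partial_t z(t),z(t)\rangle_{H^{-1},H^1_0}.$$
Second, the derivative of $\|z(t)\|_{L^2(\Omega)}^2$ lies in $L^1(0,\infty)$, because $\partial_t z\in L^2(H^{-1})$ and $z\in L^2(H^1_0)$. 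So $\|z(t)\|^2_{L^2(\Omega)}$ has a limit as $t\to\infty$. That limit must be $0$, since $\|z\|^2_{L^2(\Omega)}\in L^1(0,\infty)$ by Poincaré's inequality.

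\textbf{Step 2: the energy identity on $(0,\infty)$.} Test \eqref{TRANS-EQ} against $2z$ and integrate over $(0,\infty)$. Using $\|z(t)\|_{L^2(\Omega)}\to0$, this gives
$$\|y_0\|^2_{L^2(\Omega)}=\int_0^\infty\Big(2\|\nabla z\|^2+2\langle az,z\rangle+2\langle b\cdot\nabla z,z\rangle-2\lambda\|z\|^2-2\langle\chi_\omega v,z\rangle\Big)dt,$$
with all norms and inner products taken in $L^2(\Omega)$. The term $-2\lambda\|z\|^2$ is nonpositive, so we drop it.

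\textbf{Step 3: bound the remaining terms.} Bound the other terms by Cauchy--Schwarz and Poincaré's inequality $\|z\|\le C_P(\Omega)\|\nabla z\|$:
$$\|y_0\|^2_{L^2(\Omega)}\le\int_0^\infty\Big(2+2\|a\|_\infty C_P^2+2\|b\|_\infty C_P\Big)\|\nabla z\|^2+2C_P\|v\|\,\|\nabla z\|\,dt.$$
Applying Young's inequality to the last term gives
$$\|y_0\|^2_{L^2(\Omega)}\le C(\Omega,d,a,b)\int_0^\infty\big(\|\nabla z\|^2+\|v\|^2\big)dt=2C\,\mathcal J_2(z,v;y_0).$$
Taking the infimum over feasible pairs yields \eqref{PHI-bound-low-523} with $\tilde C_2=1/(2C)$. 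Alternatively, the convection term can be handled by integrating by parts with $\operatorname{div} b$, but that is not needed.

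\textbf{Main obstacle.} The only delicate point is the rigorous justification in Step 1. One must check that the energy identity holds on the unbounded interval in the $W$-framework, and that $\|z(t)\|_{L^2(\Omega)}\to0$. Without the limit, a boundary term at infinity would remain. Everything else is elementary, and the sign of the $\lambda$-term is what makes the constant uniform in $\lambda$.
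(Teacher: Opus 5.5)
Your proof is correct and follows the same energy-identity argument as the paper: test \eqref{TRANS-EQ} with $2z$, discard the nonpositive term $-2\lambda\|z\|_{L^2(\Omega)}^2$, and close with Poincar\'e and Young. The differences are only technical. You work with an arbitrary feasible pair and take the infimum at the end, so you never need the existence of the minimizer $(z^*,v^*)$. You also obtain the full limit $\|z(t)\|_{L^2(\Omega)}\to 0$ from the $W$-regularity, whereas the paper uses only $\liminf_{t\to\infty}\|z^*(t)\|_{L^2(\Omega)}=0$ and integrates along a sequence $T_n\to\infty$.
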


\begin{proof}
Since $\Phi_2(y_0)=\Phi_1(y_0)<\infty$, the standard minimizing sequence argument combined with the convexity of the cost functional \eqref{CO-FUNCAL-1} implies that $\Phi_2(y_0)$ is attained by a unique optimal pair $(z^*,v^*)$, so that the infimum is actually a minimum. 

Multiplying the equation \eqref{TRANS-EQ} by $2z^*$, we have
\begin{equation}\label{ENE-EST-1}
\begin{aligned}
\frac{d}{dt}\left(\|z^*(t)\|_{L^2(\Omega)}^2\right)&= -2\|\nabla z^*(t)\|_{L^2(\Omega)}^2
- 2\langle a z^*(t), z^*(t)\rangle_{L^2(\Omega)}- 2\langle b \cdot \nabla z^*(t), z^*(t)\rangle_{L^2(\Omega)}\\
&\quad+2\lambda\|z^*(t)\|_{L^2(\Omega)}^2+ 2\langle \chi_\omega v^*(t), z^*(t)\rangle_{L^2(\Omega)}.
\end{aligned}
\end{equation}
From
$$\frac{1}{2} \int_0^\infty  \left(\|\nabla z^*(t)\|_{L^2(\Omega)}^2 + \|v^*(t)\|_{L^2(\Omega)}^2\right) dt<\infty,$$
together with Poincar\'e's inequality, we conclude that
$\liminf\limits_{t\to \infty} \| z^*(t)\|_{L^2(\Omega)} =0$.
Hence, there exists an increasing sequence $\{T_n\}_{n\in\mathbb{N}^+}$ satisfying $T_n > T_{n-1}$ and $\|z^*(T_n)\|_{L^2(\Omega)} \le \frac{1}{n}, \forall\,n\in\mathbb{N}^+$.

Integrating \eqref{ENE-EST-1} over $(0,T_n)$, we obtain
\begin{equation}\label{ENE-EST-2}
\begin{aligned}
\|y_0\|_{L^2(\Omega)}^2 - \|z^*(T_n)\|_{L^2(\Omega)}^2&= 2\int_0^{T_n} \bigg(
\|\nabla z^*(t)\|_{L^2(\Omega)}^2+ \langle a z^*(t), z^*(t)\rangle_{L^2(\Omega)}+ \langle b\cdot\nabla z^*(t), z^*(t)\rangle_{L^2(\Omega)} \\
&\quad - \lambda \|z^*(t)\|_{L^2(\Omega)}^2- \langle \chi_\omega v^*(t), z^*(t)\rangle_{L^2(\Omega)}
\bigg)dt.
\end{aligned}
\end{equation}
Combined with $\|z^*(T_n)\|_{L^2(\Omega)} \le \frac{1}{n}$, this yields
\begin{equation*}
\|y_0\|_{L^2(\Omega)}^2\le \frac{1}{n^2}+ 2\int_0^{T_n} \bigg(\|\nabla z^*(t)\|_{L^2(\Omega)}^2+ \langle a z^*(t), z^*(t)\rangle_{L^2(\Omega)}+ \langle b\cdot\nabla z^*(t), z^*(t)\rangle_{L^2(\Omega)}- \langle \chi_\omega v^*(t), z^*(t)\rangle_{L^2(\Omega)}\bigg)dt.
\end{equation*}
Applying Young's inequality and taking $n\to\infty$, we deduce the lower bound
\begin{equation}\label{LOWER-BOUND}
\tilde C_2(\Omega,d,a,b)\|y_0\|_{L^2(\Omega)}^2
\le \frac12\int_0^\infty \big(
\|\nabla z^*(t)\|_{L^2(\Omega)}^2 + \|v^*(t)\|_{L^2(\Omega)}^2
\big)dt = \Phi_2(y_0).
\end{equation}
This completes the proof.
\end{proof}

\begin{remark}\label{cost-func-remark}
We further compare our cost functional with the classical low-gain functional in the LQ theory, which corresponds to $\alpha=0$ in \eqref{Func-alpha}. In the low-gain case, explicit lower bounds for the associated optimal control problem cannot be established. In contrast, we adopt a high-gain functional with $\alpha=1/2$ in this paper, and explicit lower bounds can be directly deduced by means of energy estimates, as presented in \eqref{LOWER-BOUND}.
\end{remark}

The following lemma is a standard consequence of the LQ theory for parabolic equations. We omit its proof and refer the reader to \cite{B2} for a detailed argument.

\begin{lemma}\label{lemma3.1}
Let $\lambda> 1$. For every $y_0\in L^2(\Omega)$, let $ (z^*,v^*)$ be the optimal pair of the problem \eqref{COFUC-NEW-1}. Then there exists a self-adjoint, positive operator $\mathcal P_\lambda\in \mathcal L(L^2(\Omega))$ such that
\begin{equation}\label{PHI-P-NEW-12}
\Phi_2(y_0)=\frac12\langle \mathcal P_\lambda y_0,y_0\rangle_{L^2(\Omega)},\quad \forall y_0\in L^2(\Omega),
\end{equation}
and the optimal pair satisfies
$$v^*(t)=-\chi_\omega \mathcal P_\lambda z^*(t),\;\; t>0.$$
Moreover, $\mathcal P_\lambda\in\mathcal{L}(L^2(\Omega))$ is the solution of the operator Riccati equation \eqref{PLAMBDA-61-RICCATI}.
\end{lemma}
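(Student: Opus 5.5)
The plan is to follow the standard infinite-horizon LQ construction, as in \cite{B2}, using the finiteness of $\Phi_2$ from Lemma \ref{PHIBOUND} (recall $\Phi_2=\Phi_1$) together with the coercivity of Lemma \ref{PHIBOUND-NEW-18}.

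\emph{Step 1: the quadratic form.} Existence and uniqueness of the optimal pair $(z^*,v^*)$ was already noted in the proof of Lemma \ref{PHIBOUND-NEW-18}. The control-to-state map is affine, so the optimal pair depends linearly on $y_0$: $z^*=\mathcal Z y_0$ and $v^*=\mathcal V y_0$. Testing the optimality condition against the linear pairs $(\mathcal Z h,\mathcal V h)$ shows that these maps are bounded with norm at most $C e^{\tilde C_1\sqrt\lambda}$, by Lemma \ref{PHIBOUND}. Hence $\Phi_2$ is a bounded nonnegative quadratic form on $L^2(\Omega)$. By the Riesz representation theorem there is a self-adjoint, nonnegative $\mathcal P_\lambda\in\mathcal L(L^2(\Omega))$ with $\Phi_2(y_0)=\tfrac12\langle\mathcal P_\lambda y_0,y_0\rangle$. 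Positivity, indeed coercivity, then follows from Lemma \ref{PHIBOUND-NEW-18}.

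\emph{Step 2: Pontryagin system and the feedback.} I would write the first-order optimality condition for $\mathcal J_2$ under the constraint \eqref{TRANS-EQ}. It yields an adjoint state $q$ solving
\[
-\partial_t q-\Delta q+aq-\nabla\cdot(bq)-\lambda q=-\Delta z^*,\qquad q|_{\partial\Omega}=0,
\]
with $q\in L^2(0,\infty;H_0^1)$ and $v^*=-\chi_\omega q$. Note the sign convention: the source $-\Delta z^*$ comes from the gradient term in the cost.

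By the dynamic programming principle, the restriction of $(z^*,v^*)$ to $[t,\infty)$ is optimal for the initial datum $z^*(t)$. Hence the map $y_0\mapsto q(0)$ coincides with $\mathcal P_\lambda$, obtained by differentiating $\Phi_2$ in the direction $h$, which gives $\langle q(0),h\rangle=\langle\mathcal P_\lambda y_0,h\rangle$. Applying this at time $t$ gives $q(t)=\mathcal P_\lambda z^*(t)$, and therefore $v^*=-\chi_\omega\mathcal P_\lambda z^*$.

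\emph{Step 3: the Riccati equation.} Along the optimal trajectory, $t\mapsto\tfrac12\langle\mathcal P_\lambda z^*(t),z^*(t)\rangle$ equals the remaining cost $\tfrac12\int_t^\infty(\|\nabla z^*\|^2+\|v^*\|^2)$. Differentiating in $t$ gives
\[
\langle \mathcal P_\lambda z^*,(\Delta-a-b\cdot\nabla+\lambda)z^*-\chi_\omega\mathcal P_\lambda z^*\rangle=-\tfrac12\|\nabla z^*\|^2-\tfrac12\|\mathcal P_\lambda z^*\|_{L^2(\omega)}^2 .
\]
Rearranging produces \eqref{PLAMBDA-61-RICCATI} along trajectories. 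To pass to every $z\in H^2\cap H_0^1$, I would take $y_0=z$ and let $t\to0^+$. This needs $z^*\in C([0,\varepsilon);H^2\cap H_0^1)$ and continuity of $t\mapsto\mathcal P_\lambda z^*(t)$ near $0$. Both follow from parabolic regularity of the closed-loop generator $\Delta-a-b\cdot\nabla+\lambda-\chi_\omega\mathcal P_\lambda$, which is a bounded perturbation of an analytic generator, when the initial datum lies in its domain $H^2\cap H_0^1$.

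The main obstacle is this last regularity and limit step. It amounts to justifying the identification $q=\mathcal P_\lambda z^*$ at the level of strong solutions and differentiating the value function. I would handle it by working first with $y_0$ in the domain and using semigroup theory for the closed loop, and then conclude by density where needed.
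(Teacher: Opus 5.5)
Your Steps~1 and~3 coincide with what the paper does. The paper omits the proof of this lemma (citing \cite{B2}), but carries out the same argument for the abstract system in Step~2 of the proof of Theorem~\ref{EQV}: Riesz representation of the value function from the two-sided bounds, then differentiation of $t\mapsto\frac12\langle\mathcal P_\lambda z^*(t),z^*(t)\rangle$ at $t=0$. Your linearity of $y_0\mapsto(z^*,v^*)$ is what actually makes $\Phi_2$ a quadratic form. Your closed-loop semigroup argument for $z\in H^2(\Omega)\cap H_0^1(\Omega)$ correctly justifies the derivative at $t=0$.

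The genuine difference is how you identify the adjoint with $\mathcal P_\lambda z^*$. The paper never uses an adjoint on $(0,+\infty)$. It regards $(z^*,v^*)$ restricted to $(0,T)$ as optimal for the Bolza problem with terminal cost $\frac12\langle\mathcal P_\lambda z(T),z(T)\rangle$, and takes its KKT multiplier $q$, with $q(T)=-\mathcal P_\lambda z^*(T)$ in its sign convention. It then proves $q(s)=-\mathcal P_\lambda z^*(s)$ by comparing $q$ with the multiplier of the Bolza problem on $(0,s)$. The difference solves the homogeneous adjoint equation and vanishes on $\omega$, so observability (unique continuation) forces it to be zero. You instead read off $q(0)$ as the gradient of the value function and shift in time by dynamic programming. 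This envelope argument needs convexity, linearity in $y_0$ and dynamic programming, but no unique continuation, so it is more general than the paper's.

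The weak point, however, is not the regularity in Step~3. It is your assertion that first-order optimality yields $q\in L^2(0,+\infty;H_0^1(\Omega))$ with $v^*=-\chi_\omega q$, which is not automatic on an unbounded interval. Optimality only gives $\int_0^\infty(\langle\nabla z^*,\nabla\xi\rangle+\langle v^*,\eta\rangle)\,dt=0$ for all $(\xi,\eta)$ in the kernel of the constraint map $(\xi,\eta)\mapsto\bigl(\partial_t\xi-\Delta\xi+a\xi+b\cdot\nabla\xi-\lambda\xi-\chi_\omega\eta,\ \xi(0)\bigr)$ on $W(0,+\infty;H_0^1(\Omega),H^{-1}(\Omega))\times L^2(0,+\infty;L^2(\Omega))$. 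Turning this into a multiplier requires that map to have closed range. That is a stabilizability property of the forced system, which you have not established.

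Nor can you produce $q$ by solving the adjoint equation backward from $t=+\infty$. For large $\lambda$, the shift $-\lambda q$ makes finitely many modes grow backward in time, so the homogeneous adjoint equation has decaying solutions and no condition at infinity selects $q$. Your sensitivity computation also relies on this decay to discard $\langle\xi(T),q(T)\rangle$ as $T\to+\infty$.

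The simplest repair is to run your own argument on a finite horizon:
\begin{itemize}
\item[(i)] By dynamic programming, the Bolza problem on $(0,T)$ with terminal cost $\frac12\langle\mathcal P_\lambda z(T),z(T)\rangle$ has value function $\Phi_2$ and optimal pair $(z^*,v^*)|_{(0,T)}$.
\item[(ii)] On $(0,T)$ the multiplier exists and is unique, since it solves a well-posed backward problem with terminal datum $\mathcal P_\lambda z^*(T)$ (in your sign convention).
\item[(iii)] Differentiating the value function in $y_0$ gives $q(0)=\mathcal P_\lambda y_0$.
\item[(iv)] The same reasoning applied to the Bolza problem on $(t,T)$ started from $z^*(t)$, whose multiplier is the restriction of $q$, gives $q(t)=\mathcal P_\lambda z^*(t)$.
\end{itemize}
This completes your proof and bypasses the paper's observability-based identification step.
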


From \eqref{PHI-bound-upp-523}, the self-adjoint positive operator $\mathcal{P}_{\lambda}$ satisfies
$$\|\mathcal{P}_\lambda\|_{\mathcal L(L^2(\Omega))}=\sup_{\|y\|_{L^2(\Omega)}\le 1}|\langle \mathcal{P}_\lambda y,y\rangle|\le 2e^{\tilde{C}_1\sqrt{\lambda}}.$$ 
Now, we have all the ingredients to prove Theorem \ref{main-th-rapid-stab}.\\[-2mm] 

\noindent\textit{Proof of Theorem \ref{main-th-rapid-stab}.} It can be readily shown that for all $t\ge0$, the time-shifted pair $(z^*(t+\cdot),v^*(t+\cdot))$ constitutes an optimal pair corresponding to the initial state $z^*(t)$. Accordingly,
$$\Phi_2(z^*(t))=\frac12\int_t^\infty\left(\|\nabla z^*(s)\|_{L^2(\Omega)}^2+\|v^*(s)\|_{L^2(\Omega)}^2\right)\,ds,\quad \forall t\ge0.$$

By Lemmas \ref{PHIBOUND} and \ref{PHIBOUND-NEW-18}, it holds that
$$\tilde C_2\|z^*(t)\|_{L^2(\Omega)}^2\le\Phi_2(z^*(t))\le \Phi_2(y_0) {= \Phi_1(y_0)} \le e^{\tilde{C}_1\sqrt{\lambda}}\|y_0\|_{L^2(\Omega)}^2,\quad \forall t\ge0,$$
which implies that
$$\|z^*(t)\|_{L^2(\Omega)}^2\le\tilde C_2^{-1}  e^{\tilde C_1\sqrt{\lambda}}\|y_0\|_{L^2(\Omega)}^2,\quad \forall t\ge0.$$
Considering $y(t)=e^{-\lambda t}z^*(t)$, the previous estimate yields
\begin{equation}\label{y-cost-esti}
\|y(t)\|_{L^2(\Omega)}^2=e^{-2\lambda t}\|z^*(t)\|_{L^2(\Omega)}^2\le\tilde C_2^{-1} e^{\tilde C_1\sqrt{\lambda}}e^{-2\lambda t}\|y_0\|_{L^2(\Omega)}^2,\quad \forall t\ge0.
\end{equation}

Recalling that $v^*=-\chi_\omega \mathcal P_\lambda z^*$, we infer that
$$u(t)=e^{-\lambda t}v^*(t)=-e^{-\lambda t}\chi_\omega\mathcal P_\lambda z^*(t)=-\chi_\omega\mathcal P_\lambda y(t),\quad\forall t>0.$$
In particular, 
\begin{equation}\label{u-cost-esti}
\|u(t)\|_{L^2(\Omega)}\le \|\mathcal{P}_\lambda\|_{\mathcal L(L^2(\Omega))}\|y(t)\|_{L^2(\Omega)}\le 2\tilde C_2^{-\frac 12} e^{\frac 32\tilde C_1\sqrt{\lambda}}e^{-\lambda t}\|y_0\|_{L^2(\Omega)},\quad \forall t>0.
\end{equation}
Hence, \eqref{y-cost-esti} and \eqref{u-cost-esti} yield the desired quantitative rapid stabilization estimate.\hfill$\square$

\section{Equivalence between quantitative rapid stabilization and quantitative observability}\label{EQUV-SEC}

This section is devoted to the proof of Theorem \ref{EQV}. The proof is divided into two parts. First, for systems admitting the quantitative rapid stabilization, we adopt the time iteration strategy proposed in \cite{CN} and follow the parameter selection scheme in \cite{X2}. We construct piecewise feedback laws with increasing decay rates to achieve the null controllability with explicit control costs. The quantitative observability inequality is then established via the duality principle. Conversely, the quantitative LQ framework developed in Section \ref{QRS-INF} gives the quantitative rapid stabilization.\\[-2mm] 

\noindent\textbf{Step 1. We first prove that $e^{\mathcal O(\lambda^{\frac{p}{p+1}})}$-rapidly stabilizable implies $e^{\mathcal O({\frac{1}{T^p}})}$-observable.}

To this end, we introduce the sequence
$$T_0=0,\qquad T_n=T_{n-1}+\frac{T}{2^n},\quad \forall n\in\mathbb N^+,$$
so that $\lim\limits_{n\to+\infty}T_n=T$. We also fix constants
$$Q=\max\{2^{p+2}C,2\}\quad\text{and}\quad M=\frac{Q^{p+1}(2^{p+1}-1)}{2^{p+2}(2^p-1)},$$ 
and define
$$\lambda_n=\frac{Q^{p+1}2^{(p+1)n}}{T^{p+1}},\quad \forall n\in\mathbb N.$$

For each interval $(T_n,T_{n+1}]$, we apply the feedback law associated with the parameter $\lambda_n$, namely the following time-varying feedback control
\begin{equation}\label{c-1}
u(t)=\mathcal U(t,y(t)):= K_{\lambda_n}y(t),\quad \forall t\in (T_n,T_{n+1}].
\end{equation}
Therefore, since the control is given by a linear bounded feedback law on each interval, the closed-loop system admits a unique solution $y \in C([0,T);X)$.

By the quantitative stabilization estimate corresponding to $\lambda_0$, 
we obtain on the first interval $(T_0,T_1]$ that
$$\|y(T_1)\|_{X}\le e^{C\lambda_0^{\frac p{p+1}}-\lambda_0\frac T2}\|y_0\|_{X}\le e^{\frac{Q^{p+1}}{T^p}\frac{1-2^{p+1}}{2^{p+2}}}\|y_0\|_{X}=e^{-\frac{M(2^p-1)}{T^p}}\|y_0\|_{X}.$$
Iterating the same argument on each time interval yields
$$\|y(T_n)\|_{X}\le\prod_{k=0}^{n-1}e^{-2^{kp}\frac{M(2^p-1)}{T^p}}\|y_0\|_{X}=e^{-\frac{M(2^{np}-1)}{T^p}}\|y_0\|_{X},\quad \forall n\in\mathbb N.$$
Moreover, for $t\in (T_n,T_{n+1}]$ with $n\ge 1$, we have
$$\|y(t)\|_{X}\le e^{C\lambda_n^{\frac{p}{p+1}}}e^{-\frac{M(2^{np}-1)}{T^p}}
\|y_0\|_{X}\le e^{-\frac{M(2^{(n-1)p}-1)}{T^p}}\|y_0\|_{X}.$$
Note that $p>0$. Letting $n\to\infty$, and using $\lim\limits_{n\to+\infty}T_n=T$, we conclude that $\lim\limits_{t\to T^-}\|y(t)\|_{X}=0$.

We now estimate the corresponding control cost. For every $n\ge 1$ and $t\in(T_{n},T_{n+1}]$, the quantitative bound on the feedback law gives
$$\|u(t)\|_U\le e^{C\lambda_{n}^{\frac p{p+1}}}\|y(T_{n})\|_{X}\le e^{-\frac{M(2^{(n-1)p}-1)}{T^p}}\|y_0\|_{X}\le e^{\frac{M}{T^p}}\|y_0\|_{X}.$$
Similarly, for $t\in(T_0,T_1]$,
$$\|u(t)\|_U \le e^{C\lambda_0^{\frac p{p+1}}}\|y_0\|_{X}\le e^{\frac{Q^{p+1}}{2^{p+2}T^p}}\|y_0\|_{X}\le e^{\frac{M}{T^p}}\|y_0\|_{X}.$$
Therefore, we obtain that the equation \eqref{ABSTR-EQ-INTR} is null controllable with the following control cost,
$$\|u\|_{L^2(0,+\infty;U)}=\|u\|_{L^2(0,T;U)}\le T^{1/2}\|u\|_{L^\infty(0,+\infty;U)}\le e^{\frac{M}{T^p}}\|y_0\|_{X}.
$$ 
Since $u\in L^2(0,+\infty;U)$, it follows that $y\in C([0,T];X)$. From $\lim\limits_{t\to T^-}\|y(t)\|_{X}=0$, we derive $y(T)=0$ in $X$. By the classical duality principle, this completes the proof that the original system is $e^{\mathcal O({\frac{1}{T^p}})}$-observable.\\[-2mm]

\noindent\textbf{Step 2. We prove the converse implication.}

In what follows, $C_j$ denote generic positive constants independent of $y_0$ and $\lambda$, unless otherwise specified. Arguing as in the proof of Theorem~\ref{main-th-rapid-stab}, we introduce the weighted variables
$$z(t)=e^{\lambda t}y(t),\quad v(t)=e^{\lambda t}u(t).$$
Then the system \eqref{ABSTR-EQ-INTR} is transformed into
\begin{equation}\label{ABSTR-EQ-2}
\begin{cases}
\partial_t z+Az+A_0z-\lambda z=Bv\quad\text{for } t>0,\\[1mm]
z(0)=y_0\in X.
\end{cases}
\end{equation}

We consider the following infinite-horizon LQ optimal control problem
\begin{equation}\label{COFUC-NEW-11}
\Phi_3(y_0):= {\inf}\left\{\mathcal J_3(y,u;y_0)\;\middle|\;
\begin{array}{l}
(y,u)\in W_{\lambda}(0,+\infty;D(A^{\frac{1}{2}}),D(A^{\frac{1}{2}})')\times L^2_{\lambda}(0,+\infty;U),\\[1mm]
(y,u)\ \text{satisfies \eqref{ABSTR-EQ-INTR}}
\end{array}\right\},
\end{equation}
where
\begin{equation}\label{CO-FUNCAL-11}
\mathcal J_3(y,u;y_0)=\frac12\int_0^\infty e^{2\lambda t}\left(\|A^{\frac 12}y(t)\|_{X}^2+
\|u(t)\|_{U}^2\right)\,dt,
\end{equation}
and its equivalent problem
\begin{equation}\label{COFUC-NEW-111}
\Phi_4(y_0):= {\inf}\left\{\mathcal J_4(z,v;y_0)\;\middle|\;
\begin{array}{l}
(z,v)\in W(0,+\infty;D(A^{\frac 12}),D(A^{\frac{1}{2}})')\times L^2(0,+\infty;U),\\[1mm]
(z,v)\ \text{satisfies \eqref{ABSTR-EQ-2}}
\end{array}\right\},
\end{equation}
where
\begin{equation}\label{CO-FUNCAL-111}
\mathcal J_4(z,v;y_0)=\frac12\int_0^\infty\left(\|A^{\frac 12}z(t)\|_{X}^2+\|v(t)\|_{U}^2\right)\,dt.
\end{equation}
It holds that
$$\Phi_4(y_0)=\Phi_3(y_0), \qquad \forall y_0\in X.$$

By the quantitative observability inequality \eqref{q-obs-abstr} and the standard duality argument, taking $T=\lambda^{-\frac1{p+1}}$, we obtain a control $u\in L^2(0,T;U)$ such that the solution $y\in W\big(0,T;D(A^{1/2}),D(A^{1/2})'\big)$ to the equation \eqref{ABSTR-EQ-INTR} satisfies $y(T)=0$. Moreover,
\begin{equation}\label{ABSTR-COST-CONTROL}
\|u\|_{L^2(0,T;U)}\le e^{C\lambda^{\frac{p}{p+1}}}\|y_0\|_X.
\end{equation}

Next, applying the energy estimate to \eqref{ABSTR-EQ-INTR}, we obtain
$$\frac{d}{dt}\|y(t)\|_X^2=-2\langle Ay(t),y(t)\rangle_X-2\langle A_0y(t),y(t)\rangle_X+2\langle Bu(t),y(t)\rangle_X,\;\;\forall t> 0.$$
Since $A_0\in \mathcal L(D(A^{\frac 12}),X)$, by Young's inequality, there exists a constant $C_0=C_0(A_0,B)>0$ such that
\begin{equation}\label{ABSTR-STI-ENERGY}
\frac{d}{dt}\|y(t)\|_X^2\le-\|A^{1/2}y(t)\|_X^2+C_0\|y(t)\|_X^2
+\|u(t)\|_U^2,\;\;\forall t\ge 0.
\end{equation}
Using \eqref{ABSTR-COST-CONTROL} and Gr\"onwall's inequality, we see that
$$\|y(t)\|_X^2\le e^{C_1\lambda^{\frac{p}{p+1}}}\|y_0\|_X^2,\quad \forall t\in[0,T].$$
Substituting this estimate back into the previous inequality \eqref{ABSTR-STI-ENERGY}, we also obtain
\begin{equation}\label{AA1/2-EST}\int_0^T\|A^{1/2}y(t)\|_X^2\,dt\le e^{C_2\lambda^{\frac{p}{p+1}}}\|y_0\|_X^2.
\end{equation}

Define $\widetilde{u} = \begin{cases} u & \text{on } (0,T], \\ 0 & \text{on } (T,+\infty). \end{cases}$ Then the corresponding solution is $\widetilde{y} = \begin{cases} y & \text{on } (0,T], \\ 0 & \text{on } (T,+\infty). \end{cases}$ Since $y(T)=0$ in $X$, the zero extension $\widetilde y$ belongs to $W(0,+\infty;D(A^{1/2}),D(A^{1/2})')$. Hence, $(\widetilde y,\widetilde u)$ is a feasible pair for $\Phi_3(y_0)$. From \eqref{ABSTR-COST-CONTROL} and \eqref{AA1/2-EST}, we have
\begin{equation}\label{Phi-UPPER-bound-abstract}
\Phi_3(y_0)\le e^{ C_3\lambda^{\frac{p}{p+1}}}\|y_0\|_X^2,\quad \forall y_0\in X.
\end{equation}

Since $ \Phi_4(y_0)=\Phi_3(y_0)<\infty$, the continuous embedding of $W\big(0,T;D(A^{1/2}),D(A^{1/2})'\big)$ into $C([0,T];X)$ together with the minimizing sequence analysis ensures that the infimum defining $\Phi_4(y_0)$ is attained by an optimal pair $(z^*,v^*)$, so the infimum is actually a minimum. Uniqueness follows from the convexity of the cost functional \eqref{CO-FUNCAL-111}. Furthermore, by analogous arguments to those for the parabolic case, we obtain the coercive lower bound
\begin{equation}\label{Phi-lower-bound-abstract}
C_4\|y_0\|_X^2\le \Phi_4(y_0),\quad \forall y_0\in X.
\end{equation}
Combining \eqref{Phi-UPPER-bound-abstract} and \eqref{Phi-lower-bound-abstract}, we infer that $\Phi_4$ is a continuous, positive quadratic form on $X$, induced by the symmetric bilinear form $\Upsilon\colon X\times X\to\mathbb{R}$, where
$$\Upsilon(y,w)=\frac14\big(\Phi_4(y+w)-\Phi_4(y-w)\big),\;\;\forall y,w\in X.$$

Hence, by the Riesz representation theorem for continuous quadratic forms on Hilbert spaces, there exists a bounded self-adjoint positive operator $\mathcal P_\lambda\in\mathcal L(X)$ such that
\begin{equation}\label{Phi-representation-abstract}
\Phi_4(y_0)=\frac12\langle\mathcal P_\lambda y_0,y_0\rangle_X,\quad \forall y_0\in X,
\end{equation}
and $\|\mathcal P_\lambda\|_{\mathcal L(X)}=\sup\limits_{\|y\|_X\le 1}|\langle \mathcal P_\lambda y,y\rangle_{X}|\le 2e^{C_3\lambda^{\frac{p}{p+1}}}.$

Given any $T\in (0,+\infty)$, consider the following Bolza problem
\begin{equation}\label{COFUC-Bolza-111}
\Phi_T(y_0):= {\inf}\left\{\mathcal J_T(z,v;y_0)\;\middle|\;
\begin{array}{l}
(z,v)\in W\big(0,T;D(A^{1/2}),D(A^{1/2})'\big)\times L^2(0,T;U),\\[1mm]
(z,v) \text{ satisfies \eqref{ABSTR-EQ-2}},
\end{array}\right\},
\end{equation}
where
\begin{equation}\label{Bolza-FUNCAL-111}
\mathcal J_T(z,v;y_0)=\frac12\int_0^T\left(\|A^{\frac 12}z(t)\|_{X}^2+\|v(t)\|_{U}^2\right)dt+\frac12\langle\mathcal P_\lambda z(T),z(T)\rangle_X.
\end{equation}
According to the dynamic programming principle (cf. \cite[Lemma 3.10]{BR}), the optimal pair $(z^*,v^*)$ of $\Phi_4(y_0)$, when restricted to $(0,T)$, is exactly the optimal pair for $\Phi_T(y_0)$.

By the Karush--Kuhn--Tucker (KKT) theorem for \eqref{COFUC-Bolza-111} (see \cite[Theorem A.1]{BR}), there exists a Lagrange multiplier $(\zeta,q)\in X\times L^2(0,T;D(A^{1/2}))$ such that for any $(\xi,\eta)\in  W\big(0,T;D(A^{1/2}),D(A^{1/2})'\big)\times L^2(0,T;U)$, it holds that
$$\begin{aligned}
&\int_0^T \big(\langle A^{1/2}z^*(t),A^{1/2}\xi(t)\rangle_X+\langle v^*(t),\eta(t)\rangle_U\big)dt
+\langle \mathcal P_\lambda z^*(T),\xi(T)\rangle_X \\
&\quad+\langle \zeta,\xi(0)\rangle_X
+\int_0^T\big\langle \partial_t\xi(t)+A\xi(t)+A_0\xi(t)-\lambda\xi(t)-B\eta(t),\,q(t)\big\rangle_{D(A^{1/2})',D(A^{1/2})}dt=0.
\end{aligned}$$
From the above identity, we deduce that $v^*(t) = B^* q(t)$ for a.e. $t\in (0,T)$, and
$$\begin{cases}
-\partial_t q + Aq + A_0^* q - \lambda q + A z^* = 0\quad \text{in } \;(0,T),\\[4pt]
q(T) = -\mathcal P_\lambda z^*(T).
\end{cases}$$
In particular, since $Az^*\in L^2(0,T;D(A^{\frac 12})')$, we obtain $q\in W(0,T;D(A^{\frac 12}),D(A^{\frac 12})')$ (see \cite{LM}), which in turn yields $q\in C([0,T];X)$.

Fix $s\in(0,T)$. Let $q_s$ be the corresponding KKT multiplier of the Bolza problem $\Phi_s(y_0)$, and set $\widehat q(t)=q(t)-q_s(t)$ for $t\in(0,s)$. Then $v^*(t)=B^*q_s(t)$ for a.e. $t\in(0,s)$, and
$$-\partial_t\widehat q+A\widehat q+A_0^*\widehat q-\lambda\widehat q=0\;\;\text{ in }(0,s),\qquad B^*\widehat q=0\quad\text{a.e. on }(0,s).$$
Indeed, the equation follows by subtraction, while $B^*\widehat q=B^*q-B^*q_s=v^*-v^*=0$ a.e. on $(0,s)$. Let $\phi(t)=e^{\lambda t}\widehat q(t)$. Then $\partial_t\phi-(A+A_0)^*\phi=0$ in $(0,s)$, and $B^*\phi=0$ a.e. on $(0,s)$. 

Let $\tau\in(0,\min\{s,1\})$. Applying the observability inequality
\eqref{q-obs-abstr} on $(0,\tau)$ gives $\phi(0)=0$, and therefore $\widehat q(0)=0$. By uniqueness for the linear evolution equation, $q(t)=q_s(t)$ for $t\in(0,s)$. Using $q_s(s)=-\mathcal P_\lambda z^*(s)$, $q=q_s$ on $(0,s)$, and the continuity of both $q$ and $q_s$, we obtain that
$q(s)=-\mathcal P_\lambda z^*(s)$. Since $s\in(0,T)$ is arbitrary,
$q(t)=-\mathcal P_\lambda z^*(t)$ for $t\in(0,T]$, and thus $v^*(t)=-B^*\mathcal P_\lambda z^*(t)$ for a.e. $t\in(0,T)$. As $T>0$ is arbitrary, the unique optimal control is given by
$v^*=-B^*\mathcal P_\lambda z^*$.

Hence,
$$\Phi_4(z^*(t))=\frac12\int_t^\infty\left(\|A^{\frac 12} z^*(s)\|_{X}^2+\|B^*\mathcal P_\lambda z^*(s)\|_{U}^2\right)\mathrm{d}s=\frac{1}{2}\langle\mathcal{P}_\lambda z^*(t),z^*(t)\rangle_X,\quad \forall\,t\ge0.$$
Fix $z\in D(A)$ and let $z^*$ be the optimal
state starting from $z$. Differentiating the above equality at $t=0$ and using $v^*=-B^*\mathcal P_\lambda z^*$ yield the Riccati equation
$$\langle (-A-A_0+\lambda I)z,\mathcal P_\lambda z\rangle_X-\frac12\|B^*\mathcal P_\lambda z\|_U^2+\frac12\|A^{\frac12}z\|_X^2=0,\quad\forall\,z\in D(A).$$
Moreover, we have
$$C_4\|z^*(t)\|_X^2\le \Phi_4(z^*(t))\le \Phi_4(y_0)=\Phi_3(y_0)\le e^{C_3\lambda^{\frac{p}{p+1}}}\|y_0\|_X^2,\quad \forall t\ge 0.$$

Returning to the original variables, we deduce that
$u=-B^*\mathcal P_\lambda y$. Therefore, the corresponding closed-loop system takes the form
$$\partial_t y+Ay+A_0y+BB^*\mathcal P_\lambda y=0\quad\text{in } \; (0,+\infty),$$
and the associated trajectory satisfies the estimate
$$\|u(t)\|_U+\|y(t)\|_X\le C_4^{-\frac 12}(1+2e^{C_3\lambda^{\frac{p}{p+1}}})e^{\frac 12C_3\lambda^{\frac{p}{p+1}}}e^{-\lambda t}\|y_0\|_X,\quad \forall t>0.$$
The proof is completed.\hfill$\square$

\section{Quantitative rapid stabilization for the Navier--Stokes equations}\label{NS-QRS-SEC}
This section is devoted to the proof of Theorem \ref{main-th-rapid-stab-NS}, which also serves as an application of our quantitative framework to nonlinear equations. By virtue of Theorem \ref{EQV}, we can construct an operator $\mathcal P_\lambda\in\mathcal L(\mathcal H)$ to realize the quantitative rapid stabilization for the linearized equation. We then apply the same feedback law to the nonlinear case. Together with Lyapunov stability analysis, we achieve the local quantitative rapid stabilization of the Navier--Stokes equations around $ y_e$. We also note that $\mathcal{P}_\lambda$ is continuous on $\mathcal{H}_1$, and this property is essential for estimating the nonlinear terms.\\[-2mm] 

We set
$$w:=y-y_e,\quad w_0:=y_0-y_e, \quad \text{and} \quad\pi:=p-p_e. $$
The equation \eqref{NS-EQ-INTRO} can be transformed into 
\begin{equation}\label{NS-LINEARIZED-orginal-NONLINEAR}
\left\{\begin{aligned}
&\partial_tw-\Delta w +(y_e\cdot\nabla)w +(w\cdot\nabla)y_e+(w\cdot\nabla)w+\nabla \pi = \chi_\omega u&& \text{in } (0,+\infty)\times \Omega,\\
&\nabla\cdot w =0&& \text{in } (0,+\infty)\times \Omega,\\
&w=0&& \text{on } (0,+\infty)\times \partial\Omega,\\
&w(0,\cdot)=w_0.
\end{aligned}\right.
\end{equation}

Let $\mathbb P:L^2(\Omega)^2\to \mathcal H$ be the Leray projector. We define the Stokes operator $A:D(A)\subset \mathcal H\to \mathcal H$ by
$$Ay=-\mathbb P\Delta y,\;\;\forall y\in D(A)=H^2(\Omega )^2\cap \mathcal H_1.$$
It is well-known that $A$ is strictly positive, self-adjoint, and satisfies $D(A^{\frac 12}) =\mathcal{H}_1$. Consider the operator $A_0$ with domain $D(A_0)=\mathcal H_1$ defined as
$$A_0 y = \mathbb P\bigl((y_e \cdot \nabla)y + (y \cdot \nabla)y_e\bigr),\quad \forall y\in D(A_0).$$
Note that there exists $c_1>0$ such that
$$|\langle A_0y,w\rangle_{\mathcal{ H}}|\le c_1\|y_e\|_{H^2(\Omega)^2}\|y\|_{\mathcal H_1}\|w\|_{\mathcal{H}},\quad \forall y\in \mathcal H_1 \text{ and } w\in \mathcal H.$$
Next, we introduce the bilinear operator $ a:\mathcal H_1 \times \mathcal H_1\to  \mathcal H_1'$ by
$$\langle a(y,z),w\rangle_{\mathcal H_1',\mathcal H_1}=\int_\Omega (y\cdot\nabla)z\cdot w\,dx,\quad \forall\, y,z,w\in \mathcal H_1.$$
According to \cite[Proposition 2.2]{X1}, there exists $c_2>0$ such that
\begin{equation}\label{NONlinear-est-NS}
\bigl|\langle a(y,z),w\rangle_{\mathcal H_1',\mathcal H_1}\bigr|\le c_2 \|y\|_{\mathcal H}^{\frac{1}{2}}\|y\|_{\mathcal H_1}^{\frac{1}{2}}\|z\|_{\mathcal H}^{\frac{1}{2}}\|z\|_{\mathcal H_1}^{\frac{1}{2}}\|w\|_{\mathcal H_1},\quad \forall\, y,z,w\in \mathcal H_1.
\end{equation}
Finally, denote the control operator $B\in\mathcal L(L^2(\Omega)^2,\mathcal H)$ by $Bu=\mathbb P(\chi_\omega u)$. \\[-2mm]

Based on the above definitions, we can rewrite \eqref{NS-LINEARIZED-orginal-NONLINEAR} as the following evolution equation
\begin{equation}\label{NS-PERTURB-ABS}
\left\{\begin{aligned}
&\partial_t w+Aw+A_0w+a(w,w)=Bu\quad \text{for }t>0,\\
&w(0)=w_0.
\end{aligned}\right.
\end{equation}
To construct a stabilizing feedback law, we first consider the linearized equation
\begin{equation}\label{NS-LINEARIZED-ABS-original}
\left\{\begin{aligned}
&\partial_t w+Aw+A_0w=Bu\quad \text{for }t>0,\\
&w(0)=w_0,
\end{aligned}\right.
\end{equation}
i.e.,
\begin{equation}\label{NS-LINEARIZED-orginal}
\left\{\begin{aligned}
&\partial_tw-\Delta w +(y_e\cdot\nabla)w +(w\cdot\nabla)y_e +\nabla \pi = \chi_\omega u&& \text{in } (0,+\infty)\times \Omega,\\
&\nabla\cdot w =0&& \text{in } (0,+\infty)\times \Omega,\\
&w=0&& \text{on } (0,+\infty)\times \partial\Omega,\\
&w(0,\cdot)=w_0.
\end{aligned}\right.
\end{equation}

The following quantitative observability inequality was established by Buffe and Takahashi in \cite{BT}.
\begin{lemma}\label{NS-OBS-BT-523}
Let $T\in (0,1)$. The observability inequality 
\begin{equation}\label{NS-OBS-523}
\|\varphi(0,\cdot)\|_{\mathcal H} \le e^{C(\Omega,\omega,y_e)/T} \|\varphi\|_{L^2((0,T)\times \omega)^2}
\end{equation}
holds for all $\varphi$ satisfying the adjoint equation
\begin{equation}\label{NS-ADJ}
\left\{\begin{aligned}
&-\varphi_t-\Delta\varphi-\bigl(\nabla\varphi+(\nabla\varphi)^\top\bigr)\,y_e+\nabla\rho=0&&\text{in }(0,T)\times \Omega,\\
&\nabla\cdot\varphi=0&&\text{in } (0,T)\times \Omega,\\
&\varphi=0&&\text{on }(0,T)\times \partial\Omega,\\
&\varphi(T,\cdot)\in \mathcal H.
\end{aligned}\right.
\end{equation}
\end{lemma}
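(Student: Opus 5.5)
This lemma is essentially the result of Buffe and Takahashi \cite[Corollary 1.5]{BT}, so the plan is to import their estimate and carry out the two adjustments indicated in Remark~\ref{Remark-NS-1}. The first is to check that the adjoint system \eqref{NS-ADJ} is exactly the adjoint of \eqref{NS-LINEARIZED-orginal} in $\mathcal H$. The second is to pass from the strong solutions covered in \cite{BT} to arbitrary terminal data $\varphi(T)\in\mathcal H$.

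\textbf{Step 1: identify the adjoint.} For $y,w\in\mathcal H_1$, integrate by parts using $\operatorname{div}y_e=0$ and $y_e|_{\partial\Omega}=0$. This gives
\[
\langle (y_e\cdot\nabla)y,w\rangle=-\langle y,(y_e\cdot\nabla)w\rangle,
\qquad
\langle (y\cdot\nabla)y_e,w\rangle=\langle y,(\nabla y_e)^\top w\rangle .
\]
Hence $(A+A_0)^*\varphi=A\varphi+\mathbb P\bigl(-(y_e\cdot\nabla)\varphi+(\nabla y_e)^\top\varphi\bigr)$. I will rewrite this via the identity $\bigl(\nabla\varphi+(\nabla\varphi)^\top\bigr)y_e=(y_e\cdot\nabla)\varphi+\nabla(\varphi\cdot y_e)-(\nabla y_e)^\top\varphi$; the gradient term $\nabla(\varphi\cdot y_e)$ is absorbed into the pressure $\rho$. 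Only the sign and index conventions need care here. The upshot is that \eqref{NS-ADJ} coincides with $\partial_t\phi-(A+A_0)^*\phi=0$, which is precisely the form required in Definition~\ref{Def-OBS-ABSTR}.

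\textbf{Step 2: import the estimate for regular data.} Since $y_e\in H^2(\Omega)^2\subset L^\infty$, the coefficients meet the hypotheses of \cite{BT}. For $\varphi(T)\in\mathcal H_1$ (or $D(A)$), the solution is strong, and the Carleman-based estimate of \cite{BT}, with its improved boundary treatment of the pressure, gives
\[
\|\varphi(0)\|_{\mathcal H}\le e^{C/T}\|\varphi\|_{L^2((0,T)\times\omega)^2},\qquad T\in(0,1),
\]
where $C=C(\Omega,\omega,y_e)$. If their statement carries a polynomial prefactor or a lower-order term such as $e^{C(1+1/T)}$, it is absorbed into $e^{C'/T}$ because $T<1$.

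\textbf{Step 3: density.} For $\varphi(T)\in\mathcal H$, choose $\varphi_T^n\in\mathcal H_1$ with $\varphi_T^n\to\varphi(T)$ in $\mathcal H$. Well-posedness of the backward Oseen problem in $C([0,T];\mathcal H)\cap L^2(0,T;\mathcal H_1)$ comes from the energy estimate used in \eqref{ABSTR-STI-ENERGY}. It gives $\varphi^n(0)\to\varphi(0)$ in $\mathcal H$ and $\varphi^n\to\varphi$ in $L^2((0,T)\times\omega)^2$, so the inequality passes to the limit.

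The main obstacle is Step 2. One must verify that the hypotheses of \cite{BT}, including the regularity of the convective coefficient and the exact form of the adjoint, cover the term $\bigl(\nabla\varphi+(\nabla\varphi)^\top\bigr)y_e$ with $y_e$ only in $H^2$. I would first check their Carleman estimate with a first-order term bounded in $L^\infty$. Terms like $(\nabla\varphi)^\top y_e$ are first order in $\varphi$ with $L^\infty$ coefficients, so they should be absorbable in the standard way.
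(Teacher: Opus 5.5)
Your proposal is correct and follows the same route as the paper, which cites Buffe--Takahashi \cite[Corollary 1.5]{BT} for strong solutions and then extends the estimate to $\varphi(T)\in\mathcal H$ by a standard density argument (Remark~\ref{Remark-NS-1}), exactly as in your Steps 2--3. Your Step 1, which identifies \eqref{NS-ADJ} with $\partial_t\phi-(A+A_0)^*\phi=0$ by absorbing $\nabla(\varphi\cdot y_e)$ into the pressure, is a correct detail that the paper leaves implicit.
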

By virtue of the LQ framework developed in Section \ref{EQUV-SEC} (specifically, Theorem \ref{EQV}), we have the following result.

\begin{lemma}\label{lemma3.1-NS}
There exists a self-adjoint, positive operator $\mathcal P_\lambda\in \mathcal L(\mathcal H)$ solving the following Riccati equation
\begin{equation}\label{RI-NS} 
-\langle( A+A_0-\lambda I) w,\mathcal P_\lambda w\rangle_{\mathcal H}+\frac 12\|w\|_{\mathcal H_1}^2-\frac 12	\|B^*\mathcal P_\lambda w\|_U^2=0,
\quad \forall w\in D( A),\end{equation}
which satisfies
\begin{equation}\label{P-RI-NS-BOUND-H-H}
\tilde C_2 \|w\|_{\mathcal H}^2\le\frac{1}{2}\langle \mathcal{P}_\lambda w,w\rangle_{\mathcal{H}}\le e^{\tilde C_1\sqrt{\lambda}}\|w\|_{\mathcal H}^2,\quad \forall w\in\mathcal H,
\end{equation}
where $\tilde C_1=\tilde C_1(\Omega,\omega,y_e)>0$ and $\tilde C_2=\tilde C_2(\Omega,y_e)>0$. Furthermore, there exists a constant $\tilde C_3=\tilde C_3(\Omega,\omega,y_e)>0$ such that $\mathcal P_\lambda\in \mathcal L(\mathcal H_1)$ and
\begin{equation}\label{P-RI-NS-BOUND-V-H}
\|\mathcal P_\lambda\|_{ \mathcal L(\mathcal H_1)}\le e^{\tilde C_3 \sqrt{\lambda}}.
\end{equation}
\end{lemma}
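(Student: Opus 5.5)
The first two assertions follow directly from Step 2 of the proof of Theorem \ref{EQV}, applied with $X=\mathcal H$, $U=L^2(\Omega)^2$, the Stokes operator $A$, the perturbation $A_0$, and $Bu=\mathbb P(\chi_\omega u)$. Since $y_e\in H^2(\Omega)^2$, the bound on $|\langle A_0y,w\rangle_{\mathcal H}|$ gives $A_0\in\mathcal L(\mathcal H_1,\mathcal H)$, and the same bound also gives a continuous extension $\mathcal H\to\mathcal H_1'$. For $\varphi\in\mathcal H$ one has $B^*\varphi=\chi_\omega\varphi$, and the adjoint system is exactly \eqref{NS-ADJ}. Lemma \ref{NS-OBS-BT-523} therefore states that the system is $e^{\mathcal O(1/T)}$-observable, that is, $p=1$ and $\lambda^{p/(p+1)}=\sqrt\lambda$.

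Step 2 then yields a self-adjoint positive operator $\mathcal P_\lambda$ with $\Phi_4=\frac12\langle\mathcal P_\lambda\cdot,\cdot\rangle$ that solves the Riccati equation \eqref{RI-NS}. The upper bound in \eqref{P-RI-NS-BOUND-H-H} is \eqref{Phi-UPPER-bound-abstract}, and the lower bound is \eqref{Phi-lower-bound-abstract}. The lower bound comes from the energy identity for the optimal pair, exactly as in Lemma \ref{PHIBOUND-NEW-18}. It uses $\|A^{1/2}z\|^2=\|z\|_{\mathcal H_1}^2$ together with Young's inequality on the $A_0$ term and on the control term. Hence $\tilde C_2$ depends only on $\Omega$ and $y_e$.

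The new content is \eqref{P-RI-NS-BOUND-V-H}. I would obtain it through the adjoint state. For $w_0\in\mathcal H_1$, let $(z^*,v^*)$ be the optimal pair. From the KKT system in Step 2 we have $q=-\mathcal P_\lambda z^*$, where $q$ solves the backward equation
$$-\partial_tq+Aq+A_0^*q-\lambda q+Az^*=0,$$
with $q(t)\to0$ as $t\to\infty$, since $\|\mathcal P_\lambda\|_{\mathcal L(\mathcal H)}$ is bounded and $z^*\in L^2(0,\infty;\mathcal H)$. The quantity to estimate is $\|q(0)\|_{\mathcal H_1}=\|\mathcal P_\lambda w_0\|_{\mathcal H_1}$, and I plan to do this in three steps.

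First, I would prove a parabolic regularity bound for the closed loop. The optimal state solves
$$\partial_tz^*+Az^*+A_0z^*-\lambda z^*+BB^*\mathcal P_\lambda z^*=0,$$
where the last two terms are bounded operators on $\mathcal H$ with norm at most $e^{C\sqrt\lambda}$. Testing with $Az^*$ on a finite window $(0,1)$ should give $\sup_{[0,1]}\|z^*\|_{\mathcal H_1}^2+\int_0^1\|Az^*\|^2\le e^{C\sqrt\lambda}\|w_0\|_{\mathcal H_1}^2$.

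Second, I would write
$$q(0)=-\mathcal P_\lambda z^*(1)\,\text{(propagated back)}-\int_0^1(\ldots)\,Az^*\,dt,$$
that is, I would solve the $q$-equation backward on $(0,1)$ with terminal datum $q(1)=-\mathcal P_\lambda z^*(1)\in\mathcal H$. Two contributions then need $\mathcal H_1$ bounds at time $0$. For the terminal datum, I would use the parabolic smoothing of $-\partial_t+A+A_0^*-\lambda$ over a time of length $1$. This gives an $\mathcal H\to\mathcal H_1$ bound of size $Ce^{\lambda}$, which is too large. To avoid it, I would instead take the window length $1/\lambda$. The smoothing cost is then about $\sqrt\lambda\,e^{C}$, which is polynomial, times $\|\mathcal P_\lambda\|\,\|z^*(1/\lambda)\|_{\mathcal H}$. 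For the source term $Az^*\in L^2(0,1/\lambda;\mathcal H)$, maximal regularity gives $q\in L^\infty\mathcal H_1$ with norm bounded by $e^{C}\|Az^*\|_{L^2\mathcal H}$.

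Third, I would combine these bounds. This gives $\|\mathcal P_\lambda w_0\|_{\mathcal H_1}\le e^{\tilde C_3\sqrt\lambda}\|w_0\|_{\mathcal H_1}$, where all polynomial factors in $\lambda$ are absorbed into the exponential.

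I expect the main obstacle to be the bookkeeping in the second step. The $-\lambda$ shift in the backward equation must not introduce an $e^{\lambda}$ factor, and the energy estimates must keep the Leray projector and $A_0^*$ under control. Choosing a time window of length $\lambda^{-1}$, or equivalently working with the unshifted variables $e^{-\lambda t}q$, should keep every constant at most polynomial in $\lambda$, except for the norm of $\mathcal P_\lambda$ in $\mathcal L(\mathcal H)$, which is already of size $e^{C\sqrt\lambda}$.
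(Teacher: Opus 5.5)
Your proposal is correct and follows essentially the paper's route. You identify $\mathcal P_\lambda w_0=-q(0)$ through the KKT adjoint, bound the optimal state in $\mathcal H_1$ and $L^2(D(A))$ on a short window using the $e^{C\sqrt\lambda}$ cost bound, and then use short-time parabolic smoothing for the backward adjoint equation. The differences are only in the bookkeeping: you take a window of length $1/\lambda$ and split the terminal-datum and source contributions, while the paper takes $T=1/\sqrt\lambda$, accepts the factor $e^{C\lambda T}=e^{C\sqrt\lambda}$, and uses a single $t$-weighted energy estimate.

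One small correction: the extension of $A_0$ to $\mathcal L(\mathcal H,\mathcal H_1')$ does not follow from the displayed bound $|\langle A_0y,w\rangle_{\mathcal H}|\le c_1\|y_e\|_{H^2(\Omega)^2}\|y\|_{\mathcal H_1}\|w\|_{\mathcal H}$ itself. It follows from the dual bound, which you get by integrating the transport term by parts using $\operatorname{div}y_e=0$ and $y_e=0$ on $\partial\Omega$.
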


\begin{proof}
It is easy to verify that by taking $X=\mathcal{H}$ and $U=L^2(\Omega)^2$, the operators $A$, $A_0$ and $B$ satisfy all the assumptions in Section \ref{ABS-SUBSET}. Hence, by Theorem \ref{EQV}, it suffices to prove \eqref{P-RI-NS-BOUND-V-H}. In what follows, $C$ and $\hat{C}_j$ denote generic positive constants independent of $w_0$ and $\lambda$, unless otherwise specified. 

We set $T=\frac{1}{\sqrt{\lambda}}$.
To estimate $\mathcal P_\lambda w_0$ in $\mathcal H_1$, we consider the optimality system on the interval $(0,T)$. For any $w_0\in \mathcal{H}_1$, let $(z,v)$ denote the optimal pair for \eqref{COFUC-NEW-111} and $q$ the corresponding adjoint solution, i.e.,
$$\begin{cases}
\partial_t z + Az + A_0 z -\lambda z= Bv& \text{in } (0,T),\\
-\partial_t q + A q + A_0^* q - \lambda q = -A z & \text{in } (0,T),\\
z(0)=w_0\quad \text{and}\quad q(T)=-\mathcal{P}_\lambda z(T).
\end{cases}$$ 
Since $w_0\in\mathcal H_1$, the optimal state $z$ belongs to $L^2(0,T;D(A))$ (cf. \cite{LM}). From $Az\in L^2(0,T;\mathcal{H})$ and $A_0^*\in \mathcal L(\mathcal H_1,\mathcal H)$, we deduce $q\in W(0,T;\mathcal{H}_1,\mathcal{H}_1')$, which further implies $q\in C([0,T];\mathcal{H})$. Moreover, the standard well-posedness theory for parabolic equations (cf. \cite[Chapter 1]{B2})  immediately yields $\sqrt{T-t}\,q\in L^2(0,T;D(A))$ and $\sqrt{T-t}\;\partial_t q\in L^2(0,T;\mathcal{H})$. For any $\delta>0$, the continuous embedding of $W\big(0,T-\delta;D(A),\mathcal{H}\big)$ into $ C([0,T-\delta];\mathcal{H}_1)$ gives $q\in C([0,T);\mathcal{H}_1)$.

By the argument used in the proof of Theorem~\ref{EQV}, applied to the present Oseen optimality system, we have $q(t)=-\mathcal P_\lambda z(t)$ for $ t\in(0,T]$. From $z,q\in C([0,T];\mathcal{H})$ and $\mathcal P_\lambda\in \mathcal L(\mathcal H)$, we obtain that $q(t)=-\mathcal P_\lambda z(t)$ for $ t\in [0,T]$. In particular, since $z(0)=w_0$,
$\mathcal P_\lambda w_0=-q(0)$. We now estimate $\|q(0)\|_{\mathcal H_1}$.

We begin by taking the $\mathcal H$-inner product of the state equation with $2 A z$. This yields
$$\frac{d}{dt}\bigl(\|z(t)\|_{\mathcal H_1}^2\bigr)+\| A z(t)\|_{\mathcal H}^2\le 2\lambda\|z(t)\|_{\mathcal H_1}^2+C\|z(t)\|_{\mathcal H_1}^2+C\|v(t)\|^2_{L^2(\Omega)^2}.$$
Using the bound for the optimal pair 
\begin{equation}\label{P-V-1-NS}
\int_0^\infty \bigl(\|z(t)\|_{\mathcal H_1}^2+\|v(t)\|_{L^2(\Omega)^2}^2\bigr)\,dt\le 2e^{\widetilde C_1\sqrt{\lambda}}\|w_0\|_{\mathcal H}^2,
\end{equation}
together with Gr\"onwall's inequality, we deduce
\begin{equation}\label{P-V-2-NS}
\|z(t)\|_{\mathcal H_1}^2+\int_0^T \,\| A z(t)\|_{\mathcal H}^2\,dt\le e^{\hat C_1\sqrt{\lambda}}\|w_0\|_{\mathcal H_1}^2.
\end{equation}

We now consider the adjoint equation. Define $\widetilde q(t)=q(T-t)$ for $t\in(0,T)$. Then $\widetilde q$ satisfies a forward equation with the nonhomogeneous term $-A z(T-\cdot)$:
\[
\partial_t\widetilde q+A\widetilde q+A_0^*\widetilde q-
\lambda\widetilde q=-Az(T-t)\quad \text{in } (0,T).
\]
Applying the energy estimate technique, we first obtain
$$\int_0^T \|\widetilde q(t)\|_{\mathcal H_1}^2\,dt\le C \Bigl(\|q(T)\|_{\mathcal H}^2+ \int_0^T \|  z(T-t)\|_{\mathcal H_1}^2\,dt\Bigr)e^{CT\lambda}.$$
Using $q(T)=-\mathcal{P}_\lambda z(T)$, \eqref{P-RI-NS-BOUND-H-H}, \eqref{P-V-1-NS} and \eqref{P-V-2-NS}, we get 
\begin{equation}\label{P-V-3-NS}
\int_0^T \|\widetilde q(t)\|_{\mathcal H_1}^2\,dt\le e^{\hat C_2\sqrt{\lambda}}\|w_0\|_{\mathcal H_1}^2.
\end{equation}

Next, taking the $\mathcal H$-inner product of the equation for $\widetilde q$ with $2tA\widetilde q$, we derive
$$\frac{d}{dt}\bigl(t\|\widetilde q(t)\|_{\mathcal H_1}^2\bigr)+t\| A\widetilde q(t)\|_{\mathcal H}^2\le2\lambda t\|\widetilde q(t)\|_{\mathcal H_1}^2+Ct\|\widetilde q(t)\|_{\mathcal H_1}^2		+\|\widetilde q(t)\|_{\mathcal H_1}^2+Ct\|  A z(T-t)\|_{\mathcal H}^2.$$
Integrating over $(0,T)$ and using \eqref{P-V-2-NS}, \eqref{P-V-3-NS}, we arrive at
\begin{equation}\label{P-V-4-NS}
\|\widetilde q(T)\|_{\mathcal H_1}\le T^{-1/2}e^{\hat C_3\sqrt{\lambda}}\|w_0\|_{\mathcal H_1}=\lambda^{1/4}e^{\hat C_3\sqrt{\lambda}}\|w_0\|_{\mathcal H_1}\le e^{\hat C_4\sqrt{\lambda}}\|w_0\|_{\mathcal H_1}.
\end{equation}
Finally, noting that $\widetilde q(T)=q(0)=-\mathcal P_\lambda w_0$, we conclude the proof.
\end{proof}

Now, we are ready to prove Theorem \ref{main-th-rapid-stab-NS}.\\[-2mm]

\noindent\textit{Proof of Theorem \ref{main-th-rapid-stab-NS}.}
We apply the linear bounded feedback law $u=-B^*\mathcal P_\lambda w$. Since $y_0-y_e\in \mathcal H_1$, for any $T\in (0,+\infty)$, the closed-loop system \eqref{NS-PERTURB-ABS} admits a unique strong solution $w\in W\big(0,T;D(A),\mathcal{H}\big)$ (cf. \cite[Theorem 1.7]{B2} and \cite[Theorem 3.10, Chapter III]{Teman}). It follows that $w\in C([0,T];\mathcal{H}_1)$.  

Consider the Lyapunov function
$$\mathcal V(t):=\frac12\langle \mathcal P_\lambda w(t),\,w(t)\rangle_{\mathcal H},\;\; t\ge 0.$$
Differentiating with respect to $t$ yields $\frac{d}{dt}\mathcal V(t)=\langle  w_t(t),\,\mathcal P_\lambda w(t)\rangle_{\mathcal H}$. It readily follows that
$$\frac{d}{dt}\mathcal V=-\langle Aw,\,\mathcal P_\lambda w\rangle_{\mathcal H}
-\langle A_0w,\,\mathcal P_\lambda w\rangle_{\mathcal H}-\|B^*\mathcal P_\lambda w\|_U^2-\langle a(w,w),\,\mathcal P_\lambda w\rangle_{\mathcal H_1',\mathcal H_1}.$$
Together with the Riccati equation \eqref{RI-NS}, we further deduce
$$\frac{d}{dt}\mathcal V(t)+2\lambda \mathcal V(t)\le-\frac 12 \|w(t)\|_{\mathcal H_1}^2+|\langle a(w(t),w(t)),\,\mathcal P_\lambda w(t)\rangle_{\mathcal H_1',\mathcal H_1}|.$$

Using the estimate \eqref{NONlinear-est-NS} for the Navier--Stokes bilinear term, together with \eqref{P-RI-NS-BOUND-V-H}, we infer that
$$\bigl|\langle a(w(t),w(t)),\,\mathcal P_\lambda w(t)\rangle_{\mathcal H_1',\mathcal H_1}\bigr|\le e^{\widetilde C_4\sqrt{\lambda}}\|w(t)\|_{\mathcal H}\|w(t)\|_{\mathcal H_1}^2,$$
for some $\widetilde C_4=\widetilde C_4(\Omega,\omega,y_e)>0$. Hence, we arrive at
$$\frac{d}{dt}\mathcal V(t)+2\lambda \mathcal V(t)\le-\left(\frac12-e^{\widetilde C_4\sqrt{\lambda}}\|w(t)\|_{\mathcal H}\right)\|w(t)\|_{\mathcal H_1}^2.$$

Therefore, if 
\begin{equation}
\label{pri-est-ns} 
\|w(t)\|_{\mathcal H}\le \sigma_\lambda:=\frac {e^{-\widetilde C_4\sqrt{\lambda}}}2,\quad\forall t\ge 0,	
\end{equation}
we have	
$$\frac{d}{dt}\mathcal V(t)+2\lambda \mathcal V(t)\le0,\quad\forall t> 0,$$
which implies that
\begin{equation*}
\tilde{C}_2\|w(t)\|_{\mathcal H}^2\le \mathcal V(t)\le e^{ {-2\lambda t} }  \mathcal V(0)\le e^{\tilde{C}_1\sqrt{\lambda}}e^{{-2\lambda t}} \|w_0\|_{\mathcal H}^2,\quad\forall t\ge 0.
\end{equation*}
Upon modifying the constant, we obtain that for some $\overline{C}_1=\overline{C}_1(\Omega,\omega,y_e)>0$,
\begin{equation}
\label{pri-est-ns-y-new-12} 
\|w(t)\|_{\mathcal H}\le e^{\overline{C}_1\sqrt{\lambda}}e^{{-\lambda t}} \|w_0\|_{\mathcal H},\quad\forall t\ge 0.
\end{equation}
Moreover, from $u=-B^*\mathcal P_\lambda w$, $\|\mathcal P_\lambda\|_{\mathcal L(\mathcal H)}\le 2e^{\tilde C_1\sqrt{\lambda}}$, and \eqref{pri-est-ns-y-new-12}, after enlarging $\overline C_1$ if necessary, we also have $\|u(t)\|_{L^2(\Omega)^2}\le e^{\overline C_1\sqrt{\lambda}}e^{-\lambda t}\|w_0\|_{\mathcal H}$ for all $t>0$.
Therefore, the proof of quantitative rapid stabilization is completed by choosing the initial value $\|y_0-y_e\|_{\mathcal H}$ sufficiently small so that the a priori estimate \eqref{pri-est-ns} holds.  

Indeed, the a priori estimate \eqref{pri-est-ns} holds provided that 
$$\|w_0\|_{\mathcal H}\le \rho_\lambda:={\sigma_\lambda}e^{-\overline{C}_1\sqrt{\lambda}}=\frac{e^{-(\overline{C}_1+\widetilde C_4)\sqrt{\lambda}}}{2}.$$
We prove this assertion. Suppose that there exists $\tilde{t}\in [0,+\infty)$ such that $\|w(\tilde{t})\|_{\mathcal H}\ge \sigma_\lambda$. At $t=0$, we have $ \|w(0)\|_{\mathcal H}< {\sigma_\lambda}$. Thus, since $w\in C([0,+\infty);\mathcal H)$, we define $t^*\in(0,\tilde{t}]$ as the shortest time satisfying $\|w(t^*)\|_{\mathcal H}\ge \sigma_\lambda.$ Furthermore, it holds that $ \|w(t^*)\|_{\mathcal H}={\sigma_\lambda}$, and $\|w(t)\|_{\mathcal{H}}<{\sigma}_\lambda$ for any $t\in [0,t^*)$. 
Hence, $\mathcal V(t)$ is exponentially stable over $[0,{t}^*)$.
This leads to
\begin{equation*}
\begin{aligned}
\|w(t)\|_{\mathcal H}\le e^{\overline{C}_1\sqrt{\lambda}}e^{{-\lambda t}} \|w_0\|_{\mathcal H}\le \sigma_\lambda e^{{-\lambda t}}, \;\forall t\in [0,{t}^*).
\end{aligned}
\end{equation*}
Therefore, we obtain that $\|w(t^*)\|_{\mathcal H}\le \sigma_\lambda e^{{-\lambda t^*}}<\sigma_\lambda$, which contradicts the definition of $t^*$. \hfill$\square$

\section{Finite-dimensional quantitative rapidly stabilizing laws}\label{QRS-FINITE}
This section presents the extension of our quantitative LQ framework to finite-dimensional feedback laws. We first observe that the control in Theorem \ref{main-th-rapid-stab} is in general infinite-dimensional. For self-adjoint systems such as the heat equation, we derive low-frequency quantitative estimates via spectral inequalities and employ the frequency-Lyapunov method for the finite-dimensional quantitative rapid stabilization.

\begin{lemma}\label{notfin}
The range of $\mathcal P_\lambda $ given by Theorem~\ref{main-th-rapid-stab} is not finite-dimensional.
\end{lemma}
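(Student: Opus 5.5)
We argue by contradiction and suppose that $\mathcal R(\mathcal P_\lambda)$ is finite-dimensional. Since $\mathcal P_\lambda$ is self-adjoint, this means $\mathcal P_\lambda$ is a finite-rank operator. We can write $\mathcal P_\lambda z=\sum_{j=1}^N \langle z,e_j\rangle e_j\mu_j$ with orthonormal $e_j\in L^2(\Omega)$ and $\mu_j>0$. The plan is to test the Riccati equation \eqref{PLAMBDA-61-RICCATI} against functions $z\in H^2(\Omega)\cap H_0^1(\Omega)$ that are orthogonal to $\mathcal R(\mathcal P_\lambda)$, so that $\mathcal P_\lambda z=0$.

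For such $z$, the first two terms of \eqref{PLAMBDA-61-RICCATI} vanish. Indeed,
$$\langle(\Delta-a-b\cdot\nabla+\lambda I)z,\mathcal P_\lambda z\rangle_{L^2(\Omega)}=0
\quad\text{and}\quad
\|\mathcal P_\lambda z\|_{L^2(\omega)}=0.$$
The equation then reduces to $\frac12\|\nabla z\|_{L^2(\Omega)}^2=0$. By the Dirichlet boundary condition this forces $z=0$.

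It therefore remains to check that there is a nonzero $z\in H^2(\Omega)\cap H_0^1(\Omega)$ with $z\perp e_j$ for $j=1,\dots,N$. Take the first $N+1$ Dirichlet eigenfunctions $\varphi_1,\dots,\varphi_{N+1}$ of $-\Delta$. They lie in $H^2(\Omega)\cap H_0^1(\Omega)$ and are linearly independent. The linear map $c\mapsto(\langle\sum_k c_k\varphi_k,e_j\rangle)_{j\le N}$ goes from $\mathbb R^{N+1}$ to $\mathbb R^N$, so it has a nontrivial kernel. This produces $z=\sum_k c_k\varphi_k\neq0$ orthogonal to every $e_j$, which contradicts the previous paragraph. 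Hence $\mathcal R(\mathcal P_\lambda)$ is infinite-dimensional.

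The only delicate point is the applicability of \eqref{PLAMBDA-61-RICCATI} to the chosen $z$. This is covered, since $z$ lies in $H^2(\Omega)\cap H_0^1(\Omega)$, the exact class on which Lemma \ref{lemma3.1} asserts the Riccati identity. An alternative route uses the lower bound \eqref{LOWER-BOUND} and \eqref{PHI-P-NEW-12}, which give $\langle\mathcal P_\lambda y_0,y_0\rangle\ge 2\tilde C_2\|y_0\|^2$. This makes $\mathcal P_\lambda$ injective, and in fact invertible, on the infinite-dimensional space $L^2(\Omega)$, so its range cannot be finite-dimensional. I would present this second argument as the main proof, since it is immediate, and mention the Riccati argument as a remark.
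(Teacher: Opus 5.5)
Your main argument is the paper's proof: the coercivity $\langle \mathcal P_\lambda y_0,y_0\rangle_{L^2(\Omega)}\ge 2\tilde C_2\|y_0\|_{L^2(\Omega)}^2$ from \eqref{PHI-bound-low-523} and \eqref{PHI-P-NEW-12} makes $\mathcal P_\lambda$ injective, and an injective operator on the infinite-dimensional space $L^2(\Omega)$ cannot have finite rank. Your Riccati-based remark is also correct: a nonzero $z\in H^2(\Omega)\cap H_0^1(\Omega)$ in the finite-codimensional kernel reduces \eqref{PLAMBDA-61-RICCATI} to $\|\nabla z\|_{L^2(\Omega)}=0$, and it needs only the state penalty in the Riccati equation, not the quantitative lower bound.
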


\begin{proof} We argue by contradiction. Assume that the range of $\mathcal P_\lambda$ is finite-dimensional. Then $\mathcal P_\lambda$ is a finite-rank operator. Hence, there exists $y\in L^2(\Omega)\setminus \{0\}$ such that $\mathcal P_\lambda y=0$. It readily follows from \eqref{PHI-bound-low-523} and \eqref{PHI-P-NEW-12} that $y=0$. This contradiction shows that the range of $\mathcal P_\lambda $ cannot be finite-dimensional.
\end{proof}

Let $\{(\lambda_n,\varphi_n)\}_{n\ge1}$ be the eigenvalue-eigenfunction pairs of $-\Delta$ in $H^2(\Omega)\cap H^1_0(\Omega)$, with $0<\lambda_1\le \lambda_2\le \cdots$ and $\{\varphi_n\}_{n\ge1}$ forming an orthonormal basis of $L^2(\Omega)$. Let $N(\lambda)$ be the number of eigenvalues $\lambda_n \le \lambda$, counted with multiplicity; equivalently, $\lambda_{N(\lambda)} \le \lambda < \lambda_{N(\lambda)+1}$. We then define the low-frequency projection $\mathcal{E}_\lambda\in\mathcal L(L^2(\Omega))$ by
\begin{equation}\label{low-fre-pro}
\mathcal{E}_\lambda f=\sum_{j=1}^{N(\lambda)} \langle f,\varphi_j\rangle_{L^2(\Omega)}\,\varphi_j,\quad \forall f\in L^2(\Omega).
\end{equation}

Using the frequency-Lyapunov method \cite{X2,XZ}, we establish the finite-dimensional quantitative rapid stabilization for the heat equation. This is accomplished by incorporating refined low- and high-frequency separation into the Lyapunov functional design and balancing their contributions through suitable low-frequency estimates.

\begin{theorem}\label{finite-dim}
There exists a constant $\widehat C=\widehat C(\Omega,d,\omega)>0$ such that, for any $\lambda>1$, one can find  $N=N(\lambda)$ and a positive definite matrix $P_\lambda\in\mathbb R^{N\times N}$ such that for every $y_0\in L^2(\Omega)$, the equation
\begin{equation}\label{closed-loop-low-freq}
\begin{cases}
\partial_ty-\Delta y=\displaystyle\sum_{1\le j\le N}u_j\,\chi_\omega\varphi_j&\text{in }(0,+\infty)\times\Omega ,\\[1mm]
y=0&\text{on }(0,+\infty)\times\partial\Omega ,\\[1mm]
y(0,\cdot)=y_0,
\end{cases}
\end{equation}
with the following feedback law
\begin{equation}\label{feedback-control-low-freq}
U(t):=\begin{pmatrix}u_1(t)\\\vdots\\u_N(t)\end{pmatrix}=-\,B_1^\top P_\lambda\begin{pmatrix}\langle y(t),\varphi_1\rangle_{L^2(\Omega)}\\\vdots\\\langle y(t),\varphi_N\rangle_{L^2(\Omega)}\end{pmatrix},\quad B_1=\bigl(\langle \varphi_i,\varphi_j\rangle_{L^2(\omega)}\bigr)_{1\le i,j\le N},
\end{equation}
admits a unique solution $y\in C\bigl([0,+\infty);L^2(\Omega)\bigr)$ satisfying
\begin{equation}\label{exp-decay-est}
\|U(t)\|_{2}+\|y(t)\|_{L^2(\Omega)}\le e^{\widehat C\sqrt{\lambda}}e^{-\frac\lambda 2 t}\|y_0\|_{L^2(\Omega)},\quad \forall t>0.
\end{equation}
Moreover, $P_\lambda$ is the unique positive definite solution of the algebraic Riccati equation
\begin{equation}\label{ARE-low-freq}
A_1^\top P_\lambda+P_\lambda A_1-P_\lambda B_1B_1^\top P_\lambda+I=0,
\end{equation}
where $A_1=\text{diag }(\frac \lambda 2-\lambda_1,\dots,\frac\lambda 2-\lambda_N)$. 
\end{theorem}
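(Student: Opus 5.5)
Split $y=y_1+y_2$ with $y_1=\mathcal E_\lambda y$ and $y_2=(I-\mathcal E_\lambda)y$, and write $Y(t)=(\langle y(t),\varphi_j\rangle)_{j\le N}$. Projecting \eqref{closed-loop-low-freq} onto $\varphi_i$, $i\le N$, gives the finite-dimensional system
$$\dot Y=-\Lambda Y+B_1U,\qquad \Lambda=\operatorname{diag}(\lambda_1,\dots,\lambda_N),$$
because $\langle \chi_\omega\varphi_j,\varphi_i\rangle=(B_1)_{ij}$ and $B_1$ is symmetric. With $U=-B_1^\top P_\lambda Y$, this is the low-frequency closed loop. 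The high-frequency part obeys $\partial_t y_2-\Delta y_2=(I-\mathcal E_\lambda)\chi_\omega\sum_j u_j\varphi_j$. It is only forced by $U$, and it decays like $e^{-\lambda_{N+1}t}$ with $\lambda_{N+1}>\lambda$. So the proof reduces to three steps: (i) a quantitative LQ bound for the matrix Riccati equation \eqref{ARE-low-freq}; (ii) decay of $Y$ at rate $\lambda/2$; (iii) a Duhamel estimate for $y_2$.

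For (i), apply the framework of Section~\ref{QRS-INF} in finite dimensions to $Z=e^{\lambda t/2}Y$, $V=e^{\lambda t/2}U$. These satisfy $\dot Z=A_1Z+B_1V$, and we minimize $\frac12\int_0^\infty(|Z|^2+|V|^2)\,dt$. Standard LQ theory gives a unique positive definite solution $P_\lambda$ of \eqref{ARE-low-freq}, with value function $\frac12 Z_0^\top P_\lambda Z_0$.

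Upper bound. We need a feasible control steering $Y$ to zero at time $T=1/\sqrt\lambda$ at cost $e^{C\sqrt\lambda}|Y_0|$. By duality this follows from an observability inequality $|\Psi(0)|^2\le e^{C\sqrt\lambda}\int_0^T|B_1\Psi|^2$ for $\dot\Psi=\Lambda\Psi$ backward, i.e. $\Psi(t)=e^{-\Lambda(T-t)}\Psi_T$. Identify $\Psi$ with $\psi=\sum_{j\le N}\Psi_j\varphi_j\in\operatorname{span}\{\varphi_j\}_{j\le N}$. Then $B_1\Psi$ is the vector $(\langle\psi,\varphi_i\rangle_{L^2(\omega)})_i$, so $|B_1\Psi|$ is the norm of $\mathcal E_\lambda(\chi_\omega\psi)$. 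This is \emph{not} $\|\psi\|_{L^2(\omega)}$, and here is the main obstacle: controlling it requires $|B_1\Psi|\gtrsim e^{-C\sqrt\lambda}|\Psi|$, i.e. an invertibility bound for $B_1$. Since $B_1$ is the Gram matrix of $\varphi_1,\dots,\varphi_N$ in $L^2(\omega)$, $\Psi^\top B_1\Psi=\|\psi\|_{L^2(\omega)}^2$. The Lebeau--Robbiano spectral inequality gives $\|\psi\|_{L^2(\omega)}\ge e^{-C\sqrt\lambda}\|\psi\|_{L^2(\Omega)}$. Hence $B_1\ge e^{-2C\sqrt\lambda}I$, and $|B_1\Psi|\ge e^{-2C\sqrt\lambda}|\Psi|$. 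Observability on $(0,T)$ is then immediate: $\int_0^T|B_1\Psi|^2\ge Te^{-4C\sqrt\lambda}e^{-2\lambda T}|\Psi_T|^2$, with $\lambda T=\sqrt\lambda$. This yields a control of cost $e^{C\sqrt\lambda}$. Propagating it as in Lemma~\ref{PHIBOUND}, with $e^{\lambda T/2}$ and the eigenvalues $\le\lambda$ costing only $e^{C\sqrt\lambda}$ on $(0,T)$, gives $P_\lambda\le e^{C\sqrt\lambda}I$.

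Lower bound. Since $|Z|^2$ is penalized and a finite-dimensional energy argument as in Lemma~\ref{PHIBOUND-NEW-18} applies, we get $P_\lambda\ge cI$. Differentiating as in the proof of Theorem~\ref{main-th-rapid-stab}, the value function $\frac12 Z^\top P_\lambda Z$ is nonincreasing along the closed loop. This gives $|Y(t)|\le e^{C\sqrt\lambda}e^{-\lambda t/2}|Y_0|$ and $|U(t)|\le\|B_1\|\,\|P_\lambda\|\,|Y(t)|\le e^{C\sqrt\lambda}e^{-\lambda t/2}|Y_0|$, using $\|B_1\|\le1$.

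For (iii), Duhamel gives
$$\|y_2(t)\|\le e^{-\lambda t}\|y_0\|+\int_0^t e^{-\lambda(t-s)}\Big\|\sum_j u_j(s)\chi_\omega\varphi_j\Big\|\,ds.$$
The forcing is bounded by $\sqrt N|U(s)|$. Weyl's law gives $N\le C\lambda^{d/2}\le e^{C\sqrt\lambda}$, so the forcing is at most $e^{C\sqrt\lambda}e^{-\lambda s/2}\|y_0\|$. The integral is therefore $\le e^{C\sqrt\lambda}e^{-\lambda t/2}\frac{2}{\lambda}\|y_0\|$. Summing the low- and high-frequency bounds gives \eqref{exp-decay-est}. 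Well-posedness is clear, since the feedback is a finite-rank bounded perturbation of the Dirichlet Laplacian.
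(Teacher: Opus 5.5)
Your proof is correct, but it closes the argument differently from the paper.

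\textbf{Paper's route.} The paper bounds $P_\lambda$ from above by plugging in the explicit high-gain feedback $V=-\lambda e^{C\sqrt\lambda}Z$, using the same spectral inequality (Lemma~\ref{lemma2.6}). It bounds $\sigma_{\min}(P_\lambda)$ from below by a cited eigenvalue estimate for the algebraic Riccati equation. It then handles the full state with one frequency-Lyapunov functional $\gamma_\lambda Y^\top P_\lambda Y+\|\mathcal E_\lambda^\perp y\|^2$. The weight $\gamma_\lambda=e^{C\sqrt\lambda}$ is chosen large enough to absorb the forcing of the high modes by $U$, which yields $\frac{d}{dt}\mathcal V\le-\lambda\mathcal V$.

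\textbf{Your route.} You differ at three points:
\begin{itemize}
\item You get the upper bound from finite-dimensional null controllability at $T=\lambda^{-1/2}$, in the spirit of Section~\ref{QRS-INF}.
\item You get the lower bound from an energy identity.
\item You finish with a cascade argument. The low-frequency closed loop is autonomous and the high modes are driven only by $U$, so Duhamel's formula plus the gap $\lambda_{N+1}>\lambda$ suffices.
\end{itemize}
Your cascade argument is more elementary and avoids tuning $\gamma_\lambda$. The paper's single Lyapunov functional is more robust: it does not rely on exact decoupling of low and high frequencies. It therefore survives perturbations that couple them, such as the nonlinear terms in the Navier--Stokes extension mentioned in the paper.

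\textbf{One correction.} The lower bound is not $P_\lambda\ge cI$ with $c$ independent of $\lambda$, and the analogy with Lemma~\ref{PHIBOUND-NEW-18} is misleading here. The cost penalizes $|Z|^2$, which is the low-gain case discussed in Remark~\ref{cost-func-remark}. The term $2Z^\top\Lambda Z$, the analogue of $\|\nabla z\|^2$, is not controlled by the cost and must be absorbed using $\Lambda\le\lambda I$. The energy identity therefore only gives $|Z_0|^2\le\int_0^\infty\big((\lambda+1)|Z|^2+|V|^2\big)\,dt$, that is, $P_\lambda\ge(\lambda+1)^{-1}I$.

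This loss is genuine. For $j=N$ with $\lambda_N$ close to $\lambda$ (true for large $\lambda$ by Weyl's law), the corresponding entry of $A_1$ is close to $-\lambda/2$. Zero control then already gives $\Phi_5(e_N)=\frac{1}{2(2\lambda_N-\lambda)}\lesssim 1/\lambda$.

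Since you only need $\sigma_{\min}(P_\lambda)\ge e^{-C\sqrt\lambda}$ to get $|Y(t)|\le e^{C\sqrt\lambda}e^{-\lambda t/2}|Y_0|$, the polynomial loss is harmless and your conclusion stands.
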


Set $z(t) = e^{\frac {\lambda}2 t} y(t)$, let $v_j(t) = e^{\frac {\lambda}2  t} u_j(t)$ for $j=1,\dots,N$, and denote the control vector by $V(t) = (v_1(t),\dots,v_N(t))^\top$. Then $(z,V)$ satisfies
\begin{equation}\label{TRANS-FIN}
\begin{cases}
\partial_t z - \Delta z - \frac \lambda 2z = \sum_{j=1}^N \chi_\omega v_j \varphi_j & \text{in } (0,+\infty)\times \Omega, \\
z = 0 & \text{on } (0,+\infty)\times \partial\Omega, \\
z(0,\cdot) = y_0 \in L^2(\Omega).
\end{cases}
\end{equation}
Define the low-frequency mode vector of $z(t)$ as $Z(t)=\left( \langle z(t), \varphi_1 \rangle_{L^2(\Omega)}, \dots, \langle z(t), \varphi_N \rangle_{L^2(\Omega)} \right)^\top $.
We then perform a spectral decomposition on \eqref{TRANS-FIN} to obtain the low-frequency system
\begin{equation}\label{ODE-EQ}
\begin{aligned}
\frac{d}{dt}  Z(t)&=A_1Z(t)+B_1V(t).
\end{aligned}
\end{equation}

Let $Y_0 \in\mathbb{R}^N $. We proceed with the stabilization of this low-frequency system first. A classical approach is to formulate the finite-dimensional optimal control problem
\begin{equation}\label{COFUC-NEW-111-finite-ode}
\Phi_5(Y_0):= {\inf}\left\{  \frac{1}{2} \int_0^{\infty} \left( \|Z(t)\|_2^2 + \|V(t)\|_2^2 \right)  dt \;\middle|\;
\begin{array}{l}
(Z,V)\in L^2(0,+\infty;\mathbb R^N)\times L^2(0,+\infty;\mathbb R^N),\\[1mm]
(Z,V)\ \text{satisfies \eqref{ODE-EQ} and $Z(0)=Y_0$} 
\end{array}\right\}.
\end{equation}
The lemma below is used to derive the quantitative upper bound of \eqref{COFUC-NEW-111-finite-ode} and follows directly from spectral inequalities of the Laplace operator; see \cite[Lemma 2.6]{X2} and the relevant spectral inequalities in \cite{LL,LR,LZ}.

\begin{lemma}\label{lemma2.6}
There exists a constant $C=C(\Omega,d,\omega)\ge 1$ such that for any $Y_{N(\lambda)} \in \mathbb{R}^{N(\lambda)}$, it holds that
\begin{equation}\label{spectral-ineq-app}
Y_{N(\lambda)}^\top B_1 Y_{N(\lambda)}\geq e^{-C(\Omega, d,\omega)\sqrt{\lambda}} \|Y_{N(\lambda)}\|^2_2.
\end{equation}
\end{lemma}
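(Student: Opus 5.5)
The inequality is a rewriting of the Lebeau--Robbiano spectral inequality. First I identify the quadratic form. For $Y=(Y_1,\dots,Y_N)^\top\in\mathbb R^{N}$ with $N=N(\lambda)$, set $\phi:=\sum_{j=1}^{N}Y_j\varphi_j\in \mathcal E_\lambda L^2(\Omega)$. Since $(B_1)_{ij}=\langle\varphi_i,\varphi_j\rangle_{L^2(\omega)}$, bilinearity gives
\[
Y^\top B_1 Y=\sum_{i,j=1}^{N}Y_iY_j\langle\varphi_i,\varphi_j\rangle_{L^2(\omega)}=\Big\|\sum_{j=1}^{N}Y_j\varphi_j\Big\|_{L^2(\omega)}^2=\|\phi\|_{L^2(\omega)}^2 .
\]
Because $\{\varphi_j\}$ is orthonormal in $L^2(\Omega)$, Parseval gives $\|Y\|_2^2=\|\phi\|_{L^2(\Omega)}^2$. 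The claim is therefore equivalent to
\[
\|\phi\|_{L^2(\Omega)}\le e^{C\sqrt\lambda}\|\phi\|_{L^2(\omega)}\quad\text{for all }\phi\in\operatorname{span}\{\varphi_j:\lambda_j\le\lambda\},
\]
with $C$ depending only on $(\Omega,d,\omega)$ and the constant squared.

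Second, I invoke the Lebeau--Robbiano spectral inequality \cite{LR,LL,LZ}. There is $C_0=C_0(\Omega,d,\omega)>0$ such that for every $\mu>0$ and every finite combination $\phi=\sum_{\lambda_j\le\mu}a_j\varphi_j$,
\[
\|\phi\|_{L^2(\Omega)}\le C_0e^{C_0\sqrt\mu}\|\phi\|_{L^2(\omega)} .
\]
Taking $\mu=\lambda>1$, we have $C_0\le e^{C_0}\le e^{C_0\sqrt\lambda}$, so $\|\phi\|^2_{L^2(\Omega)}\le e^{4C_0\sqrt\lambda}\|\phi\|^2_{L^2(\omega)}$. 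Setting $C:=\max\{4C_0,1\}$ gives \eqref{spectral-ineq-app}.

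There is no genuine obstacle here; the whole content lies in the cited spectral inequality. The points to check are the following:
\begin{itemize}
\item The $e^{C\sqrt\lambda}$ dependence in the paper's $\lambda$ matches $e^{C\sqrt\mu}$, where $\mu$ is the Dirichlet eigenvalue cutoff. It does, because $N(\lambda)$ is defined via eigenvalues $\le\lambda$.
\item Multiplicities are harmless, since the inequality holds on the whole spectral subspace.
\item The prefactor $C_0$ is absorbed using $\lambda>1$.
\end{itemize}

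If one wished to avoid citing the spectral inequality directly, it can be derived from the observability inequality \eqref{OBS-INTRO} with $a=b=0$. Apply it to the backward heat solution $\varphi(t)=\sum_j e^{-\lambda_j(T-t)}Y_j\varphi_j$ with $T=\lambda^{-1/2}$. Expanding the heat semigroup on the low-frequency subspace then loses at most a factor $e^{\lambda T}=e^{\sqrt\lambda}$. This route is more delicate, because bounding $\|\varphi(t)\|_{L^2(\omega)}$ by $\|\phi\|_{L^2(\omega)}$ is not immediate, so citing \cite[Lemma 2.6]{X2} or \cite{LR} is the intended route.
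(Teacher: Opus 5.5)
Your proposal is correct and follows the paper's route. The paper gives no separate argument: it simply invokes the Lebeau--Robbiano spectral inequality, via Xiang's Lemma 2.6. Your three steps spell out exactly that reduction: the identity $Y^\top B_1Y=\bigl\|\sum_j Y_j\varphi_j\bigr\|_{L^2(\omega)}^2$, Parseval for $\|Y\|_2^2$, and absorbing the prefactor $C_0$ into $e^{C\sqrt{\lambda}}$ using $\lambda>1$.
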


For the problem $\Phi_5(Y_0)$, there exists $C_1=C_1(\Omega,d,\omega)>0$ such that
\begin{equation} \label{PHI-ALL-BOUND-2} 
\Phi_5(Y_0)\le e^{C_1\sqrt{\lambda}}\|Y_0\|_2^2,\quad \forall Y_0\in \mathbb R^N.
\end{equation} 
Indeed, we set the control in \eqref{ODE-EQ} as the feedback law $ V(t)=-\lambda e^{C\sqrt{\lambda}}Z(t)$. It readily follows that
$$\frac{d}{dt}\|Z(t)\|_2^2\le Z(t)^\top\Bigl({\lambda}I-2\lambda e^{C\sqrt{\lambda}}B_1\Bigr)Z(t).$$ 
Applying \eqref{spectral-ineq-app}, we obtain $\sigma_{\max}(\lambda I-2\lambda e^{C\sqrt{\lambda}}B_1)\le -{\lambda}$. Hence, $\frac{d}{dt}\|Z(t)\|_2^2\le -{\lambda}\|Z(t)\|_2^2$, which implies that
$$\|Z(t)\|_2^2+\|V(t)\|_2^2\le (1+\lambda^2e^{2C\sqrt{\lambda}})e^{-{\lambda t}}\|Y_0\|_2^2,\quad\forall t\ge 0. $$
Thus \eqref{PHI-ALL-BOUND-2} can be derived directly from the definition of $\Phi_5( Y_0)$.

Thus, as seen in \cite{LZL}, there exists a unique positive definite matrix $P_\lambda$ solving \eqref{ARE-low-freq}, such that
\begin{equation}\label{P-PHI7}
\Phi_5(Y_0)=\frac 12 Y_0^\top P_\lambda Y_0,\quad \forall Y_0\in \mathbb R^N.
\end{equation}
For a symmetric matrix $F$, let $\sigma_{\max}(F)$ and $\sigma_{\min}(F)$ denote its largest and smallest eigenvalues, respectively. The following lemma gives the low-frequency estimates, which are crucial to derive the quantitative result of the original system \eqref{closed-loop-low-freq} within the framework of the frequency-Lyapunov method.

\begin{lemma}\label{P-eigenva}
There exists $\tilde{C}=\tilde{C}(\Omega,d,\omega)>0$ such that
\begin{equation}\label{P-bounded-new}
\sigma_{\max}(P_\lambda)\le e^{\tilde C\sqrt{\lambda}}, \quad \sigma_{\min}(P_\lambda)\ge e^{-\tilde C\sqrt{\lambda}}.
\end{equation}
\end{lemma}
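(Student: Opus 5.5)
The two bounds in \eqref{P-bounded-new} come from the two sides of the identity \eqref{P-PHI7}, $\Phi_5(Y_0)=\frac12 Y_0^\top P_\lambda Y_0$. The upper bound on $\sigma_{\max}(P_\lambda)$ follows at once from \eqref{PHI-ALL-BOUND-2}, which gives $\frac12 Y_0^\top P_\lambda Y_0\le e^{C_1\sqrt\lambda}\|Y_0\|_2^2$. Since $P_\lambda$ is symmetric positive definite, this yields $\sigma_{\max}(P_\lambda)\le 2e^{C_1\sqrt\lambda}\le e^{(C_1+1)\sqrt\lambda}$, using $\lambda>1$.

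For the lower bound I would bound $\Phi_5$ from below by an energy argument applied to an arbitrary admissible pair $(Z,V)$ with $Z(0)=Y_0$. Finiteness of the cost forces $Z,V\in L^2(0,\infty)$, so $Z\in H^1(0,\infty)$ and hence $Z(t)\to0$. Integrating the energy identity for \eqref{ODE-EQ} from $0$ to $\infty$ gives
\begin{equation*}
\|Y_0\|_2^2=-\int_0^\infty\bigl(2Z^\top A_1Z+2Z^\top B_1V\bigr)\,dt .
\end{equation*}
I then estimate the right-hand side. Because $\|A_1\|\le\lambda/2$ (indeed $|\frac\lambda2-\lambda_j|\le\frac\lambda2$ for $\lambda_j\le\lambda$), we have $|2Z^\top A_1Z|\le\lambda\|Z\|_2^2$. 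Because $B_1$ is a Gram matrix of the restrictions to $\omega$ of orthonormal functions, $0\le B_1\le I$ and $\|B_1\|\le1$, so $|2Z^\top B_1V|\le\|Z\|_2^2+\|V\|_2^2$. Together these give $\|Y_0\|_2^2\le(\lambda+1)\int_0^\infty(\|Z\|_2^2+\|V\|_2^2)\,dt$. Taking the infimum over admissible pairs yields $\Phi_5(Y_0)\ge\frac{1}{2(\lambda+1)}\|Y_0\|_2^2$, and therefore $\sigma_{\min}(P_\lambda)\ge\frac1{\lambda+1}\ge e^{-\tilde C\sqrt\lambda}$ for a suitable constant independent of $\lambda$.

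The main care is needed in justifying $Z(t)\to0$ and the vanishing of the boundary term at infinity. This is handled as in Lemma~\ref{PHIBOUND-NEW-18}, either by passing to a sequence $T_n$ along which $\|Z(T_n)\|_2\to0$, or directly from $Z\in H^1(0,\infty;\mathbb R^N)$. The remaining ingredient is the bound $\|B_1\|\le1$, which follows from $Y^\top B_1Y=\|\sum_j Y_j\varphi_j\|_{L^2(\omega)}^2\le\|Y\|_2^2$.
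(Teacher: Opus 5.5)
Your proof is correct. The upper bound is exactly the paper's argument: combine \eqref{PHI-ALL-BOUND-2} with \eqref{P-PHI7}. The lower bound is where you take a genuinely different route.

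For the lower bound, the paper does not use $\Phi_5$. It invokes an eigenvalue estimate for positive definite solutions of the algebraic Riccati equation from \cite{LZL},
$\sigma_{\min}(P_\lambda)\ge \bigl(\sqrt{\sigma_{\min}^2(A_1)+\sigma_{\max}(B_1B_1^\top)}-\sigma_{\min}(A_1)\bigr)^{-1}$.
It then bounds $\sigma_{\max}(B_1B_1^\top)$ crudely by $N(\lambda)^2$ and needs Weyl's law $N(\lambda)\le C\lambda^{d/2}$ to conclude. This gives a polynomial lower bound of order $\lambda^{-\max\{1,d/2\}}$.

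You instead transplant the energy argument of Lemma~\ref{PHIBOUND-NEW-18} to the finite-dimensional LQ problem. You use two facts: the generator is now bounded, $\|A_1\|\le\lambda/2$, and $0\le B_1\le I$, since $B_1$ is a Gram matrix of restrictions of an orthonormal family. The boundary term at infinity is handled cleanly by $Z\in H^1(0,\infty;\mathbb R^N)$.

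What your route buys:
\begin{itemize}
\item It is self-contained. It uses only \eqref{P-PHI7}, which the upper bound already needs.
\item It dispenses with Weyl's law.
\item It gives the sharper, dimension-free bound $\sigma_{\min}(P_\lambda)\ge(1+\lambda)^{-1}$.
\end{itemize}
Your observation $\|B_1\|\le 1$ would by itself also remove the $N(\lambda)^2$ and the appeal to Weyl's law from the paper's formula. Your argument also shows why the low-gain cost in \eqref{COFUC-NEW-111-finite-ode} causes no difficulty here, unlike the situation described in Remark~\ref{cost-func-remark}. The indefinite term $-2Z^\top A_1 Z$ can be absorbed at the price of a factor $\lambda$ only because $A_1$ is bounded, and this is exactly what fails for the unbounded operator in infinite dimensions.

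What the paper's route buys: its lower bound uses only the fact that $P_\lambda$ solves \eqref{ARE-low-freq}, independently of the variational characterization of $P_\lambda$.
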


\begin{proof}
On one hand, from \eqref{PHI-ALL-BOUND-2} and \eqref{P-PHI7}, we readily 
deduce that $\sigma_{\max}(P_\lambda)\le 2e^{C_1\sqrt{\lambda}}$. On the other hand, since $P_\lambda$ is the solution of the Riccati equation \eqref{ARE-low-freq}, we have (see \cite{LZL})
$$\sigma_{\min}(P_\lambda)\ge \frac{1}{\sqrt{\sigma_{\min}^2(A_1)+\sigma_{\max}(B_1B_1^\top)}-\sigma_{\min}(A_1)}\ge \frac{1}{\sqrt{9\lambda^2/4+N(\lambda)^2}+3\lambda/2}.$$
This completes the proof by Weyl's law (cf. \cite[Section 3.6]{TW}), which implies that $N(\lambda)\le C(\Omega,d)\lambda^{\frac d2}$.
\end{proof}

We proceed to prove Theorem \ref{finite-dim}.\\[-2mm]

\noindent\textit{Proof of Theorem \ref{finite-dim}.} Let $Y(t) = \left( \langle y(t), \varphi_1 \rangle_{L^2(\Omega)}, \dots, \langle y(t), \varphi_N \rangle_{L^2(\Omega)} \right)^\top $ stand for the low-frequency mode vector of $y(t)$. We adopt the feedback law $U=-B_1^\top P_\lambda Y$. For any $T\in (0,+\infty)$, the closed-loop system admits a unique solution $y\in W(0,T;H^1_0(\Omega),H^{-1}(\Omega))$; we refer to \cite[Theorem 2.3]{X2}. 

Define $\mathcal E_{\lambda}^\perp \in \mathcal{L}(L^2(\Omega))$ by $\mathcal E_{\lambda}^\perp=I-\mathcal E_{\lambda}$. We introduce the Lyapunov function
\begin{equation}\label{Lya}
\begin{aligned}
\mathcal{V}(t)=\gamma_\lambda Y(t)^\top P_\lambda  Y(t) + \|\mathcal{E}_\lambda^\perp y(t)\|_{L^2(\Omega)}^2,\;t\ge0,\end{aligned}
\end{equation} 
where the constant $\gamma_{\lambda}\ge 1$ is to be determined. Clearly, $\mathcal V(t)$ is well-defined. A simple calculation gives
$$\begin{aligned}
\frac{d}{dt}\mathcal V(t)+\lambda\mathcal V(t) &= \gamma_\lambda Y(t)^\top (A_1 P_\lambda + P_\lambda A_1) Y(t) - 2\gamma_\lambda Y(t)^\top (P_\lambda B_1 B_1^\top P_\lambda) Y(t) \\
&\quad+  \left\langle  2\Delta y(t)+\lambda y(t) +2 \sum_{j=1}^N u_j(t) \chi_\omega \varphi_j ,\mathcal{E}_\lambda ^\perp y(t)\right\rangle_{H^{-1}(\Omega),H^1_0(\Omega)}.
\end{aligned}$$

Using the Riccati equation \eqref{ARE-low-freq} satisfied by $P_\lambda$,
we derive
$$\begin{aligned}
\frac{d}{dt}\mathcal V(t)+\lambda\mathcal V(t) &\le -\gamma_\lambda \|Y(t)\|_2^2- 2 \|\nabla \mathcal{E}_\lambda ^\perp y(t)\|_{L^2(\Omega)}^2+\lambda \|\mathcal{E}_\lambda ^\perp y(t)\|_{L^2(\Omega)}^2 + 2\sum_{j=1}^N |u_j(t)| \|\mathcal{E}_\lambda ^\perp y(t)\|_{L^2(\Omega)}.
\end{aligned}$$
From the spectral property $-\|\nabla \mathcal{E}_\lambda ^\perp y(t)\|_{L^2(\Omega)}^2\le -\lambda \|\mathcal{E}_\lambda ^\perp y(t)\|_{L^2(\Omega)}^2 $ and Young's inequality, we further derive
$$\begin{aligned}
\frac{d}{dt}\mathcal V(t)+\lambda\mathcal V(t) \le \bigl( N\sigma_{\max}^2(B_1)\sigma_{\max}^2(P_\lambda)-\gamma_\lambda\bigr)\|Y(t)\|_2^2.\end{aligned}$$

In view of Lemma \ref{P-eigenva}, we take $\gamma_\lambda=e^{C_3\sqrt{\lambda}}$ with a sufficiently large constant $C_3=C_3(\Omega,d,\omega)>0$ such that
$$N\sigma_{\max}^2(B_1)\sigma_{\max}^2(P_\lambda)-\gamma_\lambda\le0,$$
which yields $\frac{d}{dt}\mathcal V(t)\le -\lambda \mathcal V(t)$. Using the eigenvalue bounds of $P_\lambda$, the choice of $\gamma_\lambda$, and $\|U(t)\|_2\le \sigma_{\max}(B_1)\sigma_{\max}(P_\lambda)\|Y(t)\|_2$, we obtain \eqref{exp-decay-est}.\hfill$\square$

\begin{remark}
We recover the finite-dimensional quantitative result by Xiang \cite{X2}. Compared with \cite{X2}, our feedback minimizes the low-frequency energy in \eqref{COFUC-NEW-111-finite-ode} for the transformed equation \eqref{TRANS-FIN}.
\end{remark}

\noindent \textbf{Acknowledgments.} Yu Xiao and Can Zhang were partially supported by NSFC 12422118; Shengquan Xiang was partially supported by NSFC 12571474.

\bibliographystyle{siamplain}  
\bibliography{ref}
\end{document}